\newtheorem{theorem}{Theorem}
\newtheorem{lemma}{Lemma}
\newtheorem{proposition}{Proposition}
\newtheorem{example}{Example}
\newtheorem{remark}{Remark}
\title{Magnitude and Topological Entropy of Digraphs}
\author{Steve Huntsman
\institute{STR \\ Arlington, Virginia}
\email{steve.huntsman@str.us}
}
\begin{document}
\maketitle

\begin{abstract}
Magnitude and (co)weightings are quite general constructions in enriched categories, yet they have been developed almost exclusively in the context of Lawvere metric spaces. We construct a meaningful notion of magnitude for flow graphs based on the observation that topological entropy provides a suitable map into the max-plus semiring, and we outline its utility. Subsequently, we identify a separate point of contact between magnitude and topological entropy in digraphs that yields an analogue of volume entropy for geodesic flows. Finally, we sketch the utility of this construction for feature engineering in downstream applications with generic digraphs.
\end{abstract}

\section{Introduction}

Let $\mathbf{M} = (\mathbf{M},\otimes,1)$ be a monoidal category (for background, see \cite{mac1978categories,fong2019invitation}) and $\mathbf{C}$ a (small) $\mathbf{M}$-category, i.e., a (small) category enriched over $\mathbf{M}$. Recall that this means that $\mathbf{C}$ is specified by a set $\text{Ob}(\mathbf{C})$; hom-objects $\mathbf{C}(j,k) \in \mathbf{M}$ for all $j, k \in \text{Ob}(\mathbf{C})$; identity morphisms $1 \rightarrow \mathbf{C}(j,j)$ for all $j \in \text{Ob}(\mathbf{C})$; and composition morphisms $\mathbf{C}(j,k) \otimes \mathbf{C}(k,\ell) \rightarrow \mathbf{C}(j,\ell)$ for all $j, k, \ell \in \text{Ob}(\mathbf{C})$; moreover, these hom-objects and morphisms are required to satisfy associativity and unitality properties \cite{kelly1982basic,fong2019invitation}.

The theory of magnitude \cite{leinster2017magnitude,leinster2021entropy} incorporates a $\mathbf{M}$-category $\mathbf{C}$ and a semiring $S$ via a ``size'' map $\sigma : \text{Ob}(\mathbf{M}) \rightarrow S$ that is constant on isomorphism classes and that satisfies $\sigma(1) = 1$ and $\sigma(X \otimes Y) = \sigma(X) \cdot \sigma(Y)$, where the semiring unit and multiplication are indicated on the right-hand sides. If $n := |\text{Ob}(\mathbf{C})| < \infty$ then its \emph{similarity matrix} $Z \in M(n,S)$ has entries $Z_{jk} := \sigma(\mathbf{C}(j,k))$. Introducing the (common) notation $$(f[X])_{jk} := f(X_{jk})$$ as a shorthand where $X$ is a matrix over the semiring $S$ and $f$ is a function on $S$, we have $Z := \sigma[\mathbf{C}]$.

A \emph{weighting} is a column vector $w$ satisfying $Zw = 1$, where the semiring matrix multiplication and column vector of ones are indicated. A \emph{coweighting} is the transpose of a weighting for $Z^T$. If $Z$ has a weighting and a coweighting, its \emph{magnitude} is the sum of the components of either one of these: a line of algebra shows these sums necessarily coincide. 

The notion of magnitude has been the subject of increasing attention over the past 15 years, and over the last year or so applications have begun to emerge based on boundary-detecting properties of (co)weightings in the setting of metric spaces \cite{bunch2020practical,huntsman2022diversity}, which is virtually the only case that has been explored to date. 
\footnote{
The only exception of which we are aware is \cite{chuang2016magnitude}, which details a nontrivial example of magnitude for a certain $\textbf{Vect}$-category; see also Example 6.4.5 of \cite{leinster2021entropy}. 
}
This setting emerges from the choice $\mathbf{M} = (([0,\infty],\ge),+,0)$, which with only a very mild continuity assumption requires $\sigma(x) = \exp(-t x)$ for some constant $t$; varying this constant leads to the notion of a \emph{magnitude function}. The corresponding enriched categories are precisely the \emph{Lawvere metric spaces}, also known as \emph{extended quasipseudometric spaces} since they generalize metric spaces by allowing distances that are infinite (extended), asymmetric (quasi-), or zero (pseudo-). In \S \ref{sec:SimilarityArithmeticRigidity} we will show that seemingly adjacent monoidal structures on $([0,\infty],\ge)$ in fact lead to the same construction, so to move away from the generalized metric space setting at all, it is necessary to move quite far indeed.


However, there are other interesting monoidal categories that yield applicable instantiations of magnitude, though \S \ref{sec:SimilarityArithmeticRigidity} shows that these must necessarily give rise to something quite different from metric spaces. In \S \ref{sec:Flowgraphs}, we introduce such a construction via a monoidal category $\mathbf{Flow}$ of \emph{flow graphs} that informs the analysis of computer programs (and also, e.g., business processes), encompassing constructs that represent the transfer of control and data \cite{cooper2011engineering,nielson2004principles} as in Figure \ref{fig:example1}. This category has two monoidal products that model ``series'' and ``conditional'' (versus ``parallel'' \emph{per se}) execution of programs as well as the structure of an operad in $\mathbf{Set}$ \cite{huntsman2019multiresolution} that dovetails with a hierarchical representation of input/output structure \cite{johnson1994program}.

For each generic flow graph $D$, there is a $\mathbf{Flow}$-category described in Lemma \ref{lemma:enriched}. The \emph{topological entropy} of hom-objects in this category provides a suitable map $\sigma$ into the max-plus semiring, and the resulting weighting (resp., coweighting) indicate sub-flow graphs of maximal entropy in the ``forward direction'' (resp., ``reverse direction''). These constructions are attractive from the point of view of feature engineering for graph matching \cite{emmert2016fifty} and machine learning problems involving flow graphs. 

Meanwhile, once we consider interactions between magnitude and topological entropy in the setting of digraphs, another point of contact is readily discernible, and we discuss it in \S \ref{sec:Neighborhoods}. The magnitude function of a ball in the universal cover of a strong loopless digraph is closely related to the topological entropy of the digraph. In \S \ref{sec:Example} we provide evidence of the utility for feature engineering based on this observation in problems involving generic digraphs.

\section{\label{sec:SimilarityArithmeticRigidity}Rigidity of similarity matrix arithmetic}

Here we show that there is even less choice in how the theory of magnitude can be applied to metric spaces and their ilk than \S 2.3 of \cite{leinster2017magnitude} suggests, wherein the usual addition operation on $([0,\infty],\ge)$ is chosen for the monoidal structure. This rigidity illustrates that meaningful notions of magnitude outside its usual arena are likely to involve very different monoidal structures and/or categories.

\begin{proposition}
Let $f$ be a strictly increasing bijection from $[0,\infty]$ to a subset of $[-\infty,\infty]$ containing $0$. Then $x \otimes y := f^{-1}(f(x)+f(y))$ gives rise to a strict symmetric monoidal structure on $([0,\infty],\ge)$ with monoidal (additive) unit $f^{-1}(0)$. \qed
\end{proposition}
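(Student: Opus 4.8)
The plan is first to unpack what ``strict symmetric monoidal structure'' means for the thin category $([0,\infty],\ge)$, and then to recognize $\otimes$ as nothing more than the additive structure of the extended reals transported along the order isomorphism $f$. Since $([0,\infty],\ge)$ is a poset regarded as a category, there is at most one morphism between any ordered pair of objects, so every coherence diagram commutes automatically; consequently a strict symmetric monoidal structure on it is exactly the data of a commutative-monoid operation $\otimes$ on the set $[0,\infty]$ together with a unit object, subject only to the single nontrivial requirement that $\otimes \colon ([0,\infty],\ge) \times ([0,\infty],\ge) \to ([0,\infty],\ge)$ be a functor, i.e.\ that $\otimes$ be monotone in each argument (with the braiding then forced to be an identity, since the category is thin). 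So the proof reduces to checking: (i) $\otimes$ is well defined; (ii) monotonicity; (iii) associativity; (iv) $f^{-1}(0)$ is a two-sided unit; and (v) commutativity. The conceptual content is just that $f$ is an order isomorphism from $([0,\infty],\ge)$ onto $\mathrm{Im}(f)$, so $\otimes$ and $f^{-1}(0)$ are the images of $+$ and $0$ under transport of structure; items (iii)--(v) will then be one-line computations.

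The one genuinely delicate point --- and the step I expect to be the main obstacle --- is (i). Writing $T := \mathrm{Im}(f) \subseteq [-\infty,\infty]$, the hypothesis gives $0 \in T$, so $f^{-1}(0)$ makes sense; and since $f$ is strictly increasing and $[0,\infty]$ has least element $0$ and greatest element $\infty$, we get $f(0) = \min T$ and $f(\infty) = \max T$, so $f(x)+f(y)$ always lies between $\min T$ and $\max T$. What still has to be ensured is that no indeterminate form $(+\infty)+(-\infty)$ arises and that $f(x)+f(y) \in T$, so that $f^{-1}$ applies. I would handle this by observing that it suffices for $(T,+,0)$ to be closed under the suitably-conventioned extended-real addition it inherits; this is automatic whenever $T$ is an order-interval, which already covers the motivating cases --- e.g.\ $f = \log$, which yields $x \otimes y = xy$ with unit $f^{-1}(0) = 1$ --- and otherwise should be read as the implicit standing requirement on $f$ packaged into the phrase ``gives rise to''. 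Granting it, $x \otimes y := f^{-1}(f(x)+f(y))$ is a well-defined element of $[0,\infty]$.

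The remaining checks I would dispatch formally, using only $f^{-1}\circ f = \mathrm{id}_{[0,\infty]}$, $f\circ f^{-1} = \mathrm{id}_{T}$, and the commutativity, associativity, and monotonicity of $+$ on $T$. For (ii): if $x \ge x'$ and $y \ge y'$ then $f(x) \ge f(x')$ and $f(y) \ge f(y')$ as $f$ is increasing, hence $f(x)+f(y) \ge f(x')+f(y')$, and applying the increasing map $f^{-1}$ gives $x \otimes y \ge x' \otimes y'$; so $\otimes$ is a functor (the direction of the order is immaterial, since the same computation holds for $\le$). For (iii), inserting $f\circ f^{-1} = \mathrm{id}_{T}$ and using associativity of $+$,
\[
(x \otimes y) \otimes z = f^{-1}\big( f(x) + f(y) + f(z) \big) = x \otimes (y \otimes z),
\]
the middle term being manifestly symmetric in $x,y,z$. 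For (iv), since $f(f^{-1}(0)) = 0$,
\[
x \otimes f^{-1}(0) = f^{-1}\big( f(x) + 0 \big) = f^{-1}(f(x)) = x = f^{-1}(0) \otimes x .
\]
For (v), $x \otimes y = f^{-1}(f(x)+f(y)) = f^{-1}(f(y)+f(x)) = y \otimes x$. Finally, since the category is thin and the underlying equalities have been verified, the associator, unitors, and braiding are all identity morphisms, so the structure is strict and symmetric, which finishes the argument.
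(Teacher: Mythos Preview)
Your verification is correct and complete. The paper itself offers no proof at all: the proposition is stated with a bare \qed, so there is nothing to compare against beyond noting that the author regards the result as routine. Your reduction to checking that $\otimes$ is a monotone commutative-monoid operation on the underlying poset is exactly the right unpacking, and items (ii)--(v) are dispatched correctly.

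Your caution about item (i) is well placed and in fact sharper than the paper. As stated, the hypotheses do \emph{not} guarantee that $f(x)+f(y)$ lands in $T=\mathrm{Im}(f)$: a strictly increasing bijection need not be continuous, so $T$ need not be an interval (e.g.\ $T=[0,1]\cup(2,\infty]$ already breaks closure at $1+1$), and if $f(0)=-\infty$ while $f(\infty)=+\infty$ then $0\otimes\infty$ is genuinely indeterminate (this occurs for the very example $f=\log$ you mention, where $x\otimes y=xy$ and $0\cdot\infty$ requires a convention). Your reading --- that closure of $(T,+)$ is an implicit standing hypothesis folded into ``gives rise to'' --- is the charitable and intended one, and is automatic in the cases the paper actually uses; but it is worth recording that the proposition is literally true only under that additional assumption.
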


A category $\mathbf{C}$ enriched over the strict symmetric monoidal category above has, for every $j,k \in \text{Ob}(\mathbf{C})$, some $\eta_{jk} := \mathbf{C}(j,k) \in [0,\infty]$ such that $\eta_{jj} = f^{-1}(0)$ and $\eta_{jk} \otimes \eta_{k\ell} \ge \eta_{j\ell}$. That is, we have the triangle inequality $f(\eta_{jk})+f(\eta_{k\ell}) \ge f(\eta_{j\ell})$. Let us therefore assume $f[\eta] = d$, and furthermore stipulate that we want our similarity matrix $Z$ to take values in the semiring $\mathbb{R}$ with the usual structure, as opposed to some more exotic choice. Then we require a function $\sigma : [0,\infty] \rightarrow \mathbb{R}$ such that $\sigma(x \otimes y) = \sigma(x) \cdot \sigma(y)$ in order to define $Z := \sigma[\eta]$. If we require continuity, then this generalized Cauchy equation has the unique family of solutions $\sigma(x) = \exp(-\tau f(x))$ for $\tau \in \mathbb{R}$. Now $Z = \sigma[\eta] = \sigma[f^{-1}[d]] = \exp[-\tau d]$, just as usual: i.e., this attempted generalization actually has no material effect.

What about a more exotic semiring structure on $\mathbb{R}$? The proposition above has a close analogue:

\begin{proposition}
Let $g$ be a strictly increasing function from $[-\infty,\infty]$ to itself, and taking on the value $0$ (and also $1$ for the final part of the statement). Then $x \oplus y := g^{-1}(g(x)+g(y))$ gives rise to a strict symmetric monoidal structure on $([-\infty,\infty],\ge)$ with monoidal (additive) unit $g^{-1}(0)$. Moreover, additionally taking $x \odot y := g^{-1}(g(x) \cdot g(y))$ gives a semiring with multiplicative unit $g^{-1}(1)$.
\footnote{
We thank S. Tringali for this observation. If $g(x) := \text{sgn}(x) \cdot |x|^p$ for $p > 0$, we get the semiring $([-\infty,\infty],\oplus,0,\cdot,1)$. If $g(x) := \exp(-\tau x)$ for $\tau < 0$, then we get the semiring $([-\infty,\infty],\oplus,-\infty,+,0)$. 
}
\qed
\end{proposition}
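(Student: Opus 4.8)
The plan is to get everything by \emph{transport of structure} along $g$. Being strictly increasing, $g$ is injective, hence a bijection onto its image $R := g([-\infty,\infty])$, and both $g$ and $g^{-1}$ are order isomorphisms between $([-\infty,\infty],\ge)$ and $(R,\ge)$. So the content of the statement is just that the ordinary commutative semiring structure on $([-\infty,\infty],+,\cdot,0,1)$ — under whatever conventions at $\pm\infty$ one adopts to make it a semiring — pulls back along $g$, and that the pulled-back operations are order-preserving. Recall also that a symmetric monoidal structure on a poset, regarded as a thin category, is nothing more than a commutative, associative, unital, and monotone binary operation, and it is automatically strict because the only isomorphisms in a thin category are identities.

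With that reduction in hand, the monoidal part breaks into routine checks. \emph{Well-definedness:} one needs $R$ closed under $+$, so that $g(x)+g(y) \in R$ and $x \oplus y := g^{-1}(g(x)+g(y))$ is a total operation. \emph{Associativity and commutativity:} apply $g$ and cancel, e.g.
\[
g\bigl((x \oplus y)\oplus z\bigr) = \bigl(g(x)+g(y)\bigr)+g(z) = g(x)+\bigl(g(y)+g(z)\bigr) = g\bigl(x \oplus (y \oplus z)\bigr),
\]
and injectivity of $g$ gives the identity; commutativity is the same with a single sum. \emph{Unit:} $g^{-1}(0)$ exists since $0 \in R$ by hypothesis, and $g\bigl(x \oplus g^{-1}(0)\bigr) = g(x)+0 = g(x)$. \emph{Monotonicity:} $x' \ge x$ forces $g(x') \ge g(x)$, hence $g(x')+g(y) \ge g(x)+g(y)$, hence $x' \oplus y \ge x \oplus y$, so $\oplus$ is monotone in each argument; strictness and symmetry then come for free by the remark above.

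For the semiring part I would rerun the same argument with $\cdot$ in place of $+$: $g^{-1}(1)$ is a two-sided multiplicative unit (it exists because $1 \in R$), $\odot$ is associative and commutative by the cancellation trick, the annihilation law $g^{-1}(0)\odot x = g^{-1}(0)$ transports from $0 \cdot g(x) = 0$, and the distributive laws transport from distributivity of $\cdot$ over $+$, e.g. $g\bigl(x \odot (y \oplus z)\bigr) = g(x)\bigl(g(y)+g(z)\bigr) = g(x)g(y)+g(x)g(z) = g\bigl((x\odot y)\oplus(x\odot z)\bigr)$, then cancel $g$. This is the same transport-of-structure mechanism that makes the first Proposition's (omitted) proof a formality.

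The one genuine point to watch — the ``obstacle'', such as it is — is the \emph{domain bookkeeping}: one must ensure $g^{-1}$ is never evaluated outside $R = \mathrm{im}(g)$, i.e. that $R$ is closed under both $+$ and $\cdot$, and one must pin down the conventions for $\infty + (-\infty)$ and $0 \cdot (\pm\infty)$ under which $([-\infty,\infty],+,\cdot)$ is genuinely a semiring. I expect the clean statement to carry these as standing hypotheses — just as the first Proposition tacitly needs $f([0,\infty])$ closed under $+$ — with the two cases in the footnote being the benign ones, since there $\mathrm{im}(g)$ is either $[0,\infty]$ (closed under $+$ and $\cdot$, and free of indeterminate forms) or all of $[-\infty,\infty]$.
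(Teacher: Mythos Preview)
The paper gives no proof of this proposition at all: the \qed\ immediately follows the statement, signalling that the author regards it as routine (just as for the preceding proposition). Your transport-of-structure argument is exactly the natural way to supply the omitted verification, and your identification of the one substantive caveat---closure of $\mathrm{im}(g)$ under $+$ and $\cdot$, and the handling of indeterminate forms at $\pm\infty$---is correct and matches what the paper leaves implicit.
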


Now the equation for a weighting is $\bigoplus_k (Z_{jk} \odot w_k) = g^{-1}(1)$, which unpacks to the matrix equation $g[Z]g[w] = 1$ in ordinary arithmetic. Recalling that $Z = \sigma[\eta]$ and $f[\eta] = d$, we have $Z = \sigma[f^{-1}[d]]$. Meanwhile, we have the generalized Cauchy equation $\sigma(x \otimes y) = \sigma(x) \odot \sigma(y)$, which unpacks to
\begin{equation}
\label{eq:generalizedCauchy}
\sigma(f^{-1}(f(x)+f(y))) = g^{-1}(g(\sigma(x)) \cdot g(\sigma(y))).
\end{equation}
Defining $h := g \circ \sigma \circ f^{-1}$, this becomes $h(f(x)+f(y)) = h(f(x)) \cdot h(f(y))$, i.e., $h$ satisfies the usual Cauchy equation; assuming continuity, we have $h[d] = \exp[-\tau d]$. Since $g[Z] = h[d]$, the weighting equation is $h[d]g[w] = 1$, which apart from the transformation of $w$ is the same as in ordinary arithmetic. 

In short, it appears to be at least difficult--perhaps impossible--to get substantially different arithmetic of similarity matrices than the ``default'' while still working over the extended real numbers, regardless of which underlying arithmetic we use. The thin silver lining is that we can legitimately apply a very broad class of componentwise transformations to a (co)weighting and still interpret the result as a (co)weighting also, albeit with respect to a different underlying semiring structure.

Nevertheless, the notion of magnitude still affords useful application to quite different monoidal categories; in the sequel, we give an example.

\section{\label{sec:Flowgraphs}Max-plus magnitude for flow graphs}

Throughout this paper, by \emph{digraph} we mean the usual notion in combinatorics. In particular, we do not allow multiple edges between vertices (i.e., a quiver is generally not a digraph \emph{per se}). See footnote \ref{foot:digraph}.

Consider the specific notion of \emph{flow graph} discussed in \cite{huntsman2019multiresolution}, viz. a digraph $D$ with exactly one source and exactly one target, such that there is a unique (entry) edge from the source and a unique (exit) edge to the target, and such that identifying the source of the entry edge with the target of the exit edge yields a strong digraph (i.e., a digraph in which every two vertices are connected by some path).
An example is the digraph in the right panel of Figure \ref{fig:example1}.

\begin{figure}
\begin{minipage}{0.3\textwidth}
\begin{verbnobox}[\scriptsize\arabic{VerbboxLineNo}\scriptsize\hspace{3ex}]
START
repeat
    repeat
        repeat
            if b goto 7
            if b
                repeat
                    S
                until b
            endif
        until b
        do while b
            do while b
                repeat
                    S
                until b
            enddo
        enddo
    until b
until b
HALT
\end{verbnobox}
\end{minipage}
\begin{minipage}{0.1\textwidth}
\ \\
\end{minipage}
\begin{minipage}{0.6\textwidth}
	\includegraphics[trim = 65mm 100mm 55mm 100mm, clip, width=\textwidth, keepaspectratio]{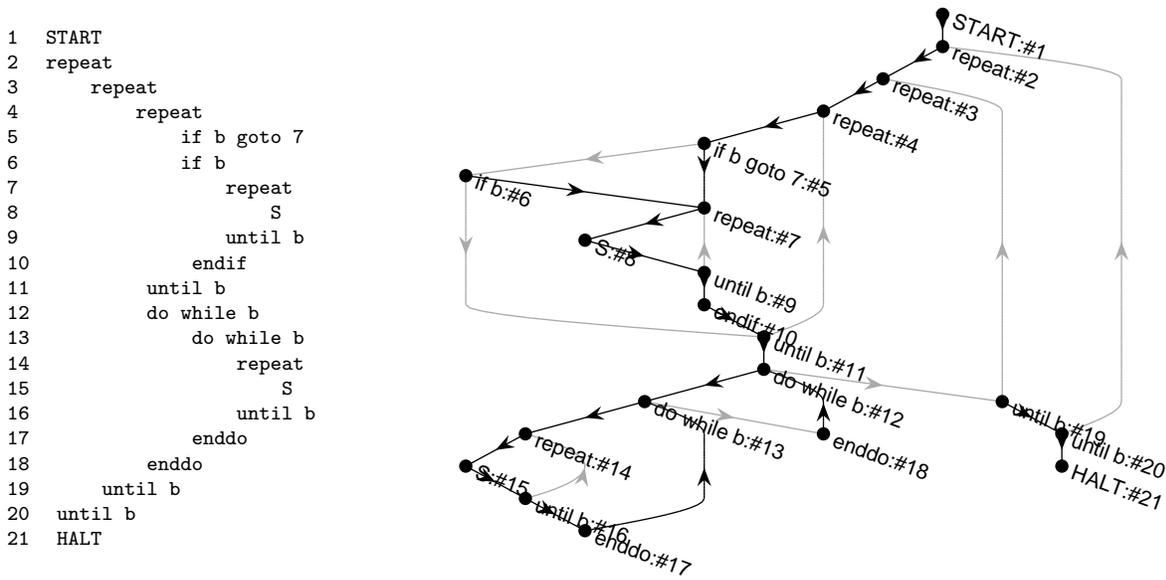}
\end{minipage}
	\caption{\label{fig:example1} (L) A simple imperative program. \texttt{S} denotes a generic statement (or subroutine); \texttt{b} denotes a generic Boolean predicate. (R) The corresponding control flow graph: branches are shaded {\color{black}black} (resp., {\color{gray}gray}) if the corresponding \texttt{b} evaluates to ${\color{black}\top}$ or ${\color{gray}\bot}$.}
\end{figure}

Let ${\bf Flow}$ be the full subcategory of reflexive digraphs
\footnote{
\label{foot:digraph}
An object in the category $\mathbf{Dgph}$ of reflexive digraphs is $G = (U,\alpha,\omega)$, where $U$ is a set and $\alpha, \omega : U \rightarrow U$ are \emph{head} and \emph{tail} functions that satisfy $\alpha \circ \omega = \omega$ and $\omega \circ \alpha = \alpha$. For $G' = (U',\alpha',\omega')$, a morphism $f \in {\bf Dgph}(G,G')$ is a function $f : U \rightarrow U'$ such that $f \circ \alpha = \alpha' \circ f$ and $f \circ \omega = \omega' \circ f$. The \emph{vertices} of $G = (U,\alpha,\omega)$ are the (mutual) image $V \equiv V(G)$ of $\alpha$ and $\omega$; the \emph{loops} are the set $L \equiv L(G) := \{u \in U : \alpha(u) = \omega(u)\}$ (so that $V \subseteq L$), and the \emph{edges} are the set $E \equiv E(G) := U \backslash L$. We recover the usual notion of a digraph by considering $\alpha \times \omega$ and its appropriate restrictions on $U^2$, $L^2$, and $E^2$: e.g., we can abusively write $E = (\alpha \times \omega)(E^2)$, where the LHS and RHS respectively refer to usual and reflexive notions of digraph edges. Thus a morphism $f : U \rightarrow U'$ restricts to $f|_V : V \rightarrow V'$, $f|_L : L \rightarrow L'$, and $f|_E : E \rightarrow U'$. Since morphisms are only partially specified by their actions on vertices, defining ${\bf Flow}$ as a full subcategory of $\mathbf{Dgph}$ is essentially a convention about vertex identification.
}
whose objects are (combinatorially realized as) flow graphs. It turns out that there are both ``series'' and ``parallel'' tensor products on ${\bf Flow}$, as well as the structure of an operad in $\mathbf{Set}$ which has a conceptually and algorithmically attractive instantiation. We are presently interested in the ``series'' tensor product, denoted $\boxtimes$. The idea of $\boxtimes$ is just to identify the exit edge of its first argument with the entry edge of its second argument (so unlike the ``parallel'' tensor product, this does not give rise to a symmetric monoidal structure). It turns out that this yields (the monoidal base of) an enriched category, viz. the ${\bf Flow}$-category ${\bf SubFlow}_D$ of sub-flow graphs of a flow graph $D$ (these correspond to subroutines in the context of program control flow).

\begin{lemma} \cite{huntsman2019multiresolution}
\label{lemma:enriched} 
For a flow graph $D$, we can form a category ${\bf SubFlow}_D$ enriched over ${\bf Flow}$ as follows:
\begin{itemize}
\item $\textnormal{Ob}({\bf SubFlow}_D) := E(D)$ (i.e., the objects of ${\bf SubFlow}_D$ are the edges of the digraph $D$); \footnote{Loops and reflexive self-edges are not included here, though the former may be accommodated without substantial changes.}
\item for $e_s, e_t \in {\bf SubFlow}_D$, the hom object ${\bf SubFlow}_D(e_s,e_t) \in {\bf Flow}$ is the (possibly empty) induced sub-flow graph of $D$ with entry edge $e_s$ and exit edge $e_t$: we denote this by $D\langle e_s, e_t \rangle$; 
\item the composition morphism is induced by $\boxtimes$; 
\item the identity element is determined by the flow graph $e$ with one edge. \qed
\end{itemize}
\end{lemma}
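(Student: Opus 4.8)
The plan is to verify the three structural ingredients of an enriched category in turn, using the series tensor product $\boxtimes$ as the composition law and the single-edge flow graph as the unit. First I would pin down the hom-objects: for edges $e_s, e_t \in E(D)$, the induced sub-flow graph $D\langle e_s, e_t\rangle$ is the digraph spanned by all vertices and edges lying on some $e_s$-to-$e_t$ path in $D$, with $e_s$ promoted to the entry edge and $e_t$ to the exit edge. I would check that this is genuinely an object of $\mathbf{Flow}$ when nonempty — exactly one source (the tail of $e_s$), one target (the head of $e_t$), unique entry and exit edges, and that identifying source with target yields a strong digraph. Strongness here follows because every vertex of $D\langle e_s, e_t\rangle$ sits on an $e_s$–$e_t$ path by construction, so after the identification every vertex reaches $e_t$'s head $=e_s$'s tail and is reached from it. The empty case ($e_s$ does not reach $e_t$, or the only such path degenerates) is allowed by the statement ("possibly empty"), so I would just note it is handled by convention as the initial/terminal object of $\mathbf{Flow}$, which absorbs under $\boxtimes$.

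Next I would exhibit the composition morphism $D\langle e_s, e_t\rangle \boxtimes D\langle e_t, e_u\rangle \to D\langle e_s, e_u\rangle$. The point of $\boxtimes$ is that it glues the exit edge of the first argument ($e_t$) to the entry edge of the second ($e_t$ again), so the two copies of $e_t$ are identified and the result is a flow graph with entry $e_s$ and exit $e_u$; the natural map into $D\langle e_s, e_u\rangle$ is the evident inclusion (every $e_s$–$e_t$ path followed by an $e_t$–$e_u$ path is an $e_s$–$e_u$ path, after possibly contracting revisits — but since we work in $\mathbf{Dgph}$ with the vertex-identification convention of footnote \ref{foot:digraph}, this is a well-defined $\mathbf{Dgph}$-morphism). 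For the identity I would take the object $e$ with a single edge and the morphism $e \to D\langle e_s, e_s\rangle$ picking out that edge; note $D\langle e_s, e_s\rangle$ contains $e_s$ as both entry and exit edge, so this is well-defined.

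Then I would check the coherence axioms. Associativity of composition reduces to associativity of $\boxtimes$ (a fact about $\mathbf{Flow}$ that I would cite from \cite{huntsman2019multiresolution}) together with the observation that both $(D\langle e_s,e_t\rangle \boxtimes D\langle e_t,e_u\rangle)\boxtimes D\langle e_u,e_v\rangle$ and $D\langle e_s,e_t\rangle \boxtimes (D\langle e_t,e_u\rangle\boxtimes D\langle e_u,e_v\rangle)$ map to $D\langle e_s,e_v\rangle$ via the same inclusion of paths. Left and right unitality reduce to the fact that $\boxtimes$-gluing the single-edge flow graph onto the entry or exit edge of any flow graph returns that flow graph up to the coherence isomorphism of $\mathbf{Flow}$. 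I expect the main obstacle to be bookkeeping rather than conceptual: making the "induced sub-flow graph" operation precise enough (in the reflexive-digraph formalism of footnote \ref{foot:digraph}, where morphisms are only partially determined on vertices) that the gluing maps are honest $\mathbf{Dgph}$-morphisms and the diagrams commute on the nose, and handling the degenerate cases (empty hom-objects, or $e_s$, $e_t$ sharing endpoints) uniformly. Since the full verification appears in \cite{huntsman2019multiresolution}, I would present the argument at the level of these structural checks and defer the detailed diagram chases to that reference.
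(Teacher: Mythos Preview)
The paper does not prove this lemma at all: it is stated with a citation to \cite{huntsman2019multiresolution} and a terminal \qed, so there is no in-paper argument to compare against. Your sketch is strictly more than what the paper provides and follows the natural verification route; since you likewise defer the fine details to the same reference, you are aligned with the paper's treatment.

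One small correction worth making if you write this up: the composition map $D\langle e_s,e_t\rangle \boxtimes D\langle e_t,e_u\rangle \to D\langle e_s,e_u\rangle$ is generally not an \emph{inclusion}. A vertex of $D$ lying on both an $e_s$--$e_t$ path and an $e_t$--$e_u$ path appears once in each tensor factor (which are disjoint away from the glued edge $e_t$) but only once in $D\langle e_s,e_u\rangle$, so the map identifies these copies. You already anticipate this with ``contracting revisits,'' and the footnote's notion of $\mathbf{Dgph}$-morphism accommodates it; just avoid the word ``inclusion.''
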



A digraph $D$ determines a \emph{(sub)shift of finite type}, i.e., a dynamical system on the space of paths in $D$ with an evolution operator that simply shifts path indices. The corresponding \emph{topological entropy} $h(D) := \lim_{N \uparrow \infty} N^{-1} \log W(D,N)$ measures the growth of the number $W(D,N)$ of paths in $D$ of length $N$ \cite{kitchens2012symbolic}. A basic result in symbolic dynamics is that $h(D)$ is given by the logarithm of the spectral radius of the adjacency matrix of $D$. 
(If $D$ is strong, the spectral radius is the Perron eigenvalue $\ge 1$.) 

\begin{lemma} 
\label{lemma:entropy} For $D_j \in {\bf Flow}$,
\begin{equation}
\label{eq:entropy}
h(\boxtimes_j D_j) = \max_j h(D_j).
\end{equation}
\end{lemma}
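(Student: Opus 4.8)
The plan is to translate the statement into spectral language and exploit a block-triangular structure that the gluing $\boxtimes$ forces on the adjacency matrix. First I would reduce to the binary case: since $\boxtimes$ is associative and $\mathbf{Flow}$ is closed under it, writing $\boxtimes_j D_j = \left(\boxtimes_{j<m} D_j\right) \boxtimes D_m$ and inducting on the (finite) number $m$ of factors reduces \eqref{eq:entropy} to $h(D_1 \boxtimes D_2) = \max\bigl(h(D_1), h(D_2)\bigr)$. By the basic result recalled above, $h(D) = \log \rho(A_D)$ where $A_D$ is the adjacency matrix of $D$ and $\rho$ denotes spectral radius, so it suffices to prove $\rho\bigl(A_{D_1 \boxtimes D_2}\bigr) = \max\bigl(\rho(A_{D_1}), \rho(A_{D_2})\bigr)$.

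Next I would unwind the construction: $D_1 \boxtimes D_2$ is obtained from disjoint copies of $D_1$ and $D_2$ by identifying the exit edge of $D_1$ (the unique edge into its target $t$) with the entry edge of $D_2$ (the unique edge out of its source $s$). Ordering the vertices of $D_1 \boxtimes D_2$ as ``those of $D_1$ other than $t$'', then ``those of $D_2$ other than $s$'', the single identified edge runs from the first group to the second. The crucial observation — and essentially the only place the flow-graph axioms are used — is that $t$ has out-degree $0$ in $D_1$ and $s$ has in-degree $0$ in $D_2$, so in $D_1 \boxtimes D_2$ no edge runs from the second group back to the first. Hence $A_{D_1 \boxtimes D_2}$ is block upper triangular with diagonal blocks $A_P$ and $A_Q$, where $A_P$ is the adjacency matrix of the digraph obtained from $D_1$ by deleting the sink $t$ (equivalently, the induced subdigraph on the first group, the point being that the only $D_2$-edges among those vertices would be loops at the single shared vertex $s$, of which there are none) and $A_Q$ is that of $D_2$ with the source $s$ deleted.

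Finally, since the characteristic polynomial of a block-triangular matrix is the product of those of its diagonal blocks, $\rho\bigl(A_{D_1\boxtimes D_2}\bigr) = \max\bigl(\rho(A_P), \rho(A_Q)\bigr)$. The same fact run in reverse gives $\rho(A_P) = \rho(A_{D_1})$ and $\rho(A_Q) = \rho(A_{D_2})$: putting $t$ last exhibits $A_{D_1}$ as block upper triangular with diagonal blocks $A_P$ and a $1\times 1$ zero block (as $t$ has no out-edges), whose spectral radius $0$ is $\le \rho(A_P)$; symmetrically for $D_2$, using that $s$ has no in-edges. Combining the three identities and taking logarithms yields the binary case, and the induction then gives \eqref{eq:entropy}. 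I expect the only real obstacle to be the bookkeeping in the middle step — verifying the ``no backward edge'' claim and the identification of $A_P$ with the adjacency matrix of $D_1$ minus its sink, including the mildly degenerate configurations (the identified edge being a loop, or $D_1$ and $D_2$ sharing more than the two identified endpoints), which should either collapse to the generic argument or be excluded by the conventions defining $\mathbf{Flow}$; everything after that is routine linear algebra.
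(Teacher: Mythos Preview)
Your argument is correct, but it takes a genuinely different route from the paper. The paper's proof works directly from the definition $h(D)=\lim_{N\to\infty}N^{-1}\log W(D,N)$: the lower bound comes from restricting to paths confined to the $D_j$ of largest entropy, and the upper bound from the observation that a path in $\boxtimes_j D_j$ decomposes as a concatenation of segments lying in successive $D_j$'s (with no return possible), so the total path count is bounded by a polynomial-in-$N$ sum of products of the individual $W(D_j,\cdot)$, which cannot grow faster than $(\max_j e^{h(D_j)})^N$. Your approach instead passes through $h(D)=\log\rho(A_D)$ and exhibits $A_{D_1\boxtimes D_2}$ as block upper triangular after deleting the sink of $D_1$ and the source of $D_2$, then recovers $\rho(A_{D_i})$ from the blocks by a second block-triangular step. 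The paper's argument is shorter and conceptually closer to the dynamical definition; yours requires a bit more bookkeeping but yields more, namely the spectral decomposition that the paper records separately as the Remark immediately following the lemma (that $\operatorname{spec} A(\boxtimes_j D_j)$ is, up to extra zeros, the union of the $\operatorname{spec} A(D_j)$, and hence the zeta-function factorization). Your caution about degenerate configurations is well placed but harmless here: the conventions for $\mathbf{Flow}$ force the source and target to be distinct loopless vertices with exactly one incident edge each, so the shared vertices carry no extra edges and the block structure is exactly as you describe.
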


\begin{proof}
To see the $\ge$ direction, consider paths that are confined to whichever $D_j$ has highest topological entropy. For the $\le$ direction, note that the number of paths that are not so confined cannot grow at a faster rate. 
\end{proof}

\begin{remark}
In fact more is true: writing $A(D)$ for the adjacency matrix of $D$, we have via standard Perron-Frobenius theory that (as multisets) $$\textnormal{spec } A(\boxtimes_j D_j) = \{0\} \cup \bigcup_j \textnormal{spec } A(D_j).$$ (The zero is due to the first column/last row [using the obvious indexing] of $A(\boxtimes_j D_j)$ being identically zero.) Defining the zeta function $\zeta_D(t) := 1/\det(I-tA(D))$ \cite{mizuno2001zeta}, we furthermore have that $$\zeta_{\boxtimes_j D_j} = \prod_j \zeta_{D_j}.$$
For examples, see Figures \ref{fig:foo} and \ref{fig:baz}.

\begin{figure}[h]
  \centering
  \includegraphics[trim = 20mm 65mm 20mm 75mm, clip, width=.49\textwidth,keepaspectratio]{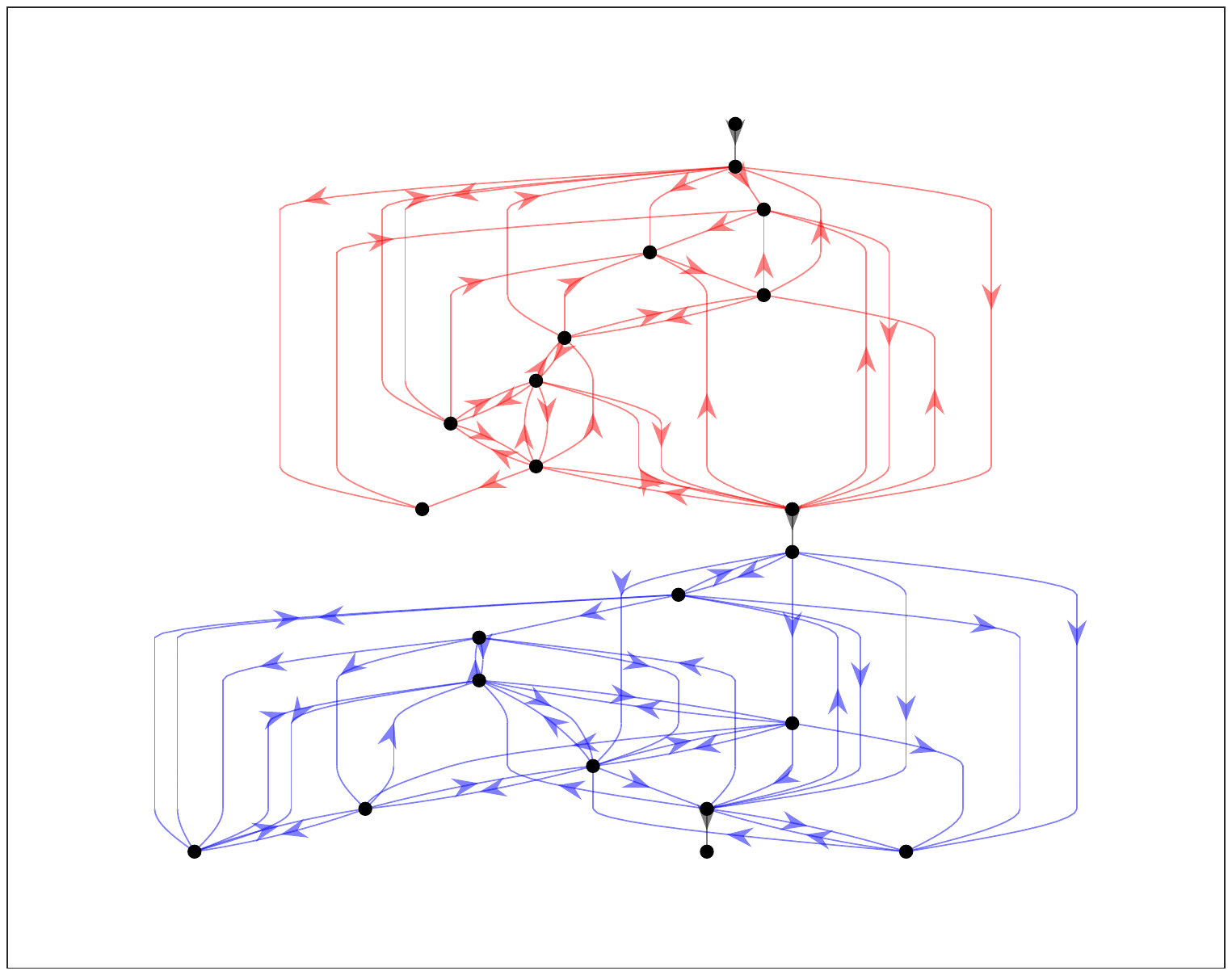}
  \includegraphics[trim = 20mm 65mm 20mm 75mm, clip, width=.49\textwidth,keepaspectratio]{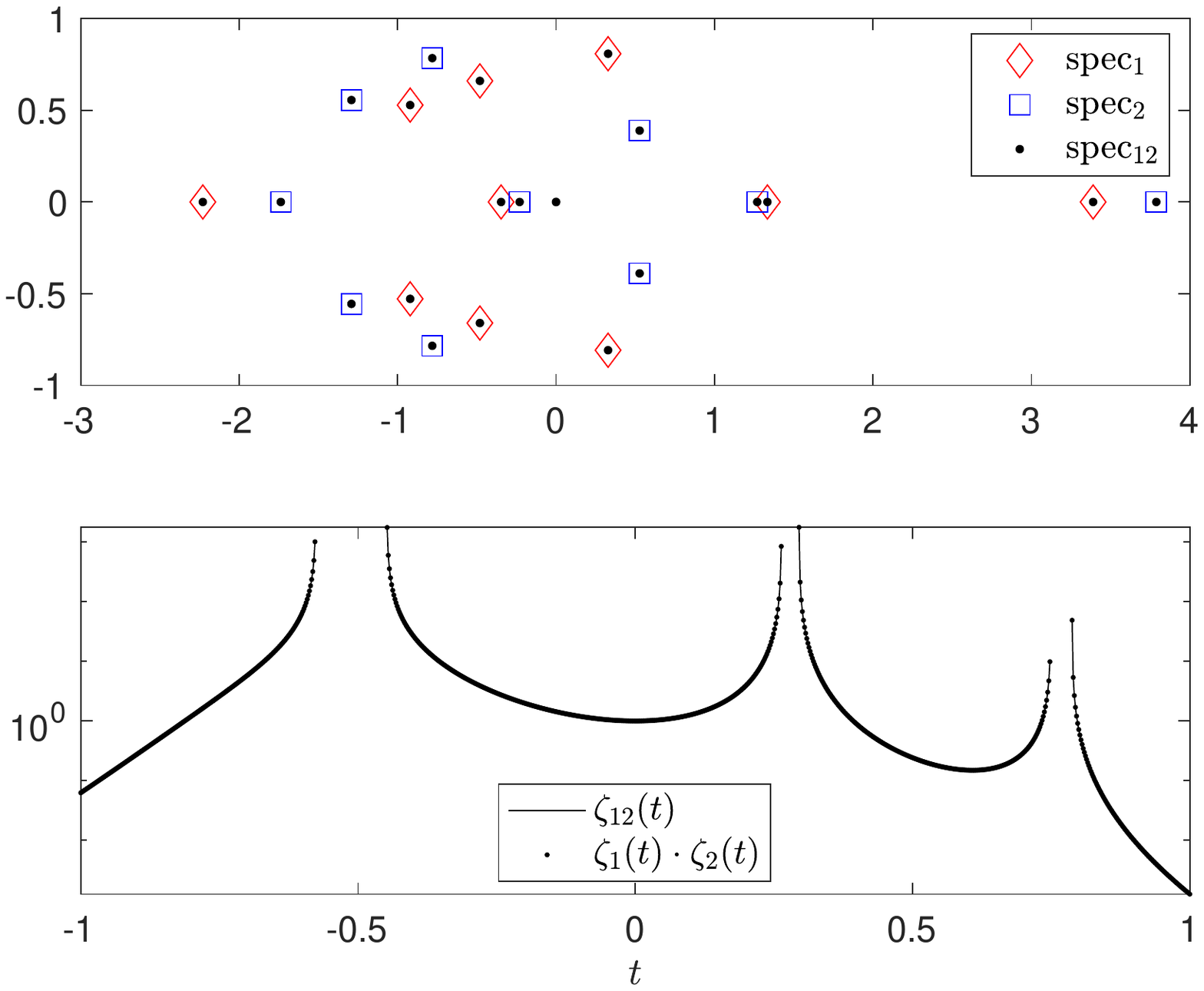} 
  \caption{\label{fig:foo} Left: ${\color{red}D_1} \boxtimes {\color{blue}D_2}$ for two flow graphs ${\color{red}D_1}$ and ${\color{blue}D_2}$ on 10 vertices. Upper right: spectra $\text{spec}_x \subset \mathbb{C}$ of the adjacency matrices $A(D_x)$ for ${\color{red}x = 1}$, ${\color{blue}x = 2}$, and $x = 12$ with $D_{12} := {\color{red}D_1} \boxtimes {\color{blue}D_2}$. Lower right: zeta functions $\zeta_{12}$ and $\zeta_{\color{red}1} \cdot \zeta_{\color{blue}2}$ with $\zeta_x \equiv \zeta_{D_x}$.}
\end{figure}


\begin{figure}[h]
  \centering
  \includegraphics[trim = 20mm 65mm 20mm 75mm, clip, width=.49\textwidth,keepaspectratio]{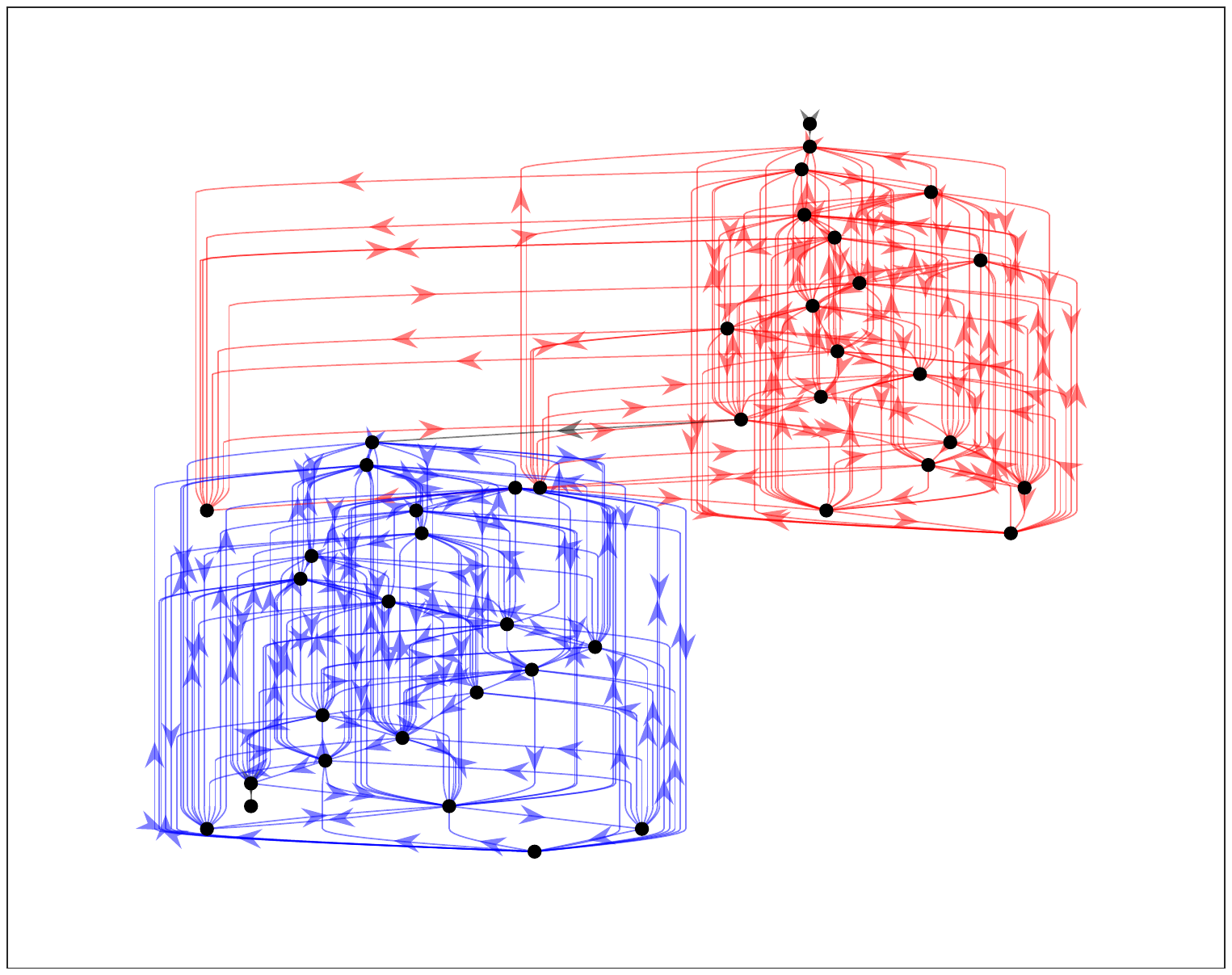}
  \includegraphics[trim = 20mm 65mm 20mm 75mm, clip, width=.49\textwidth,keepaspectratio]{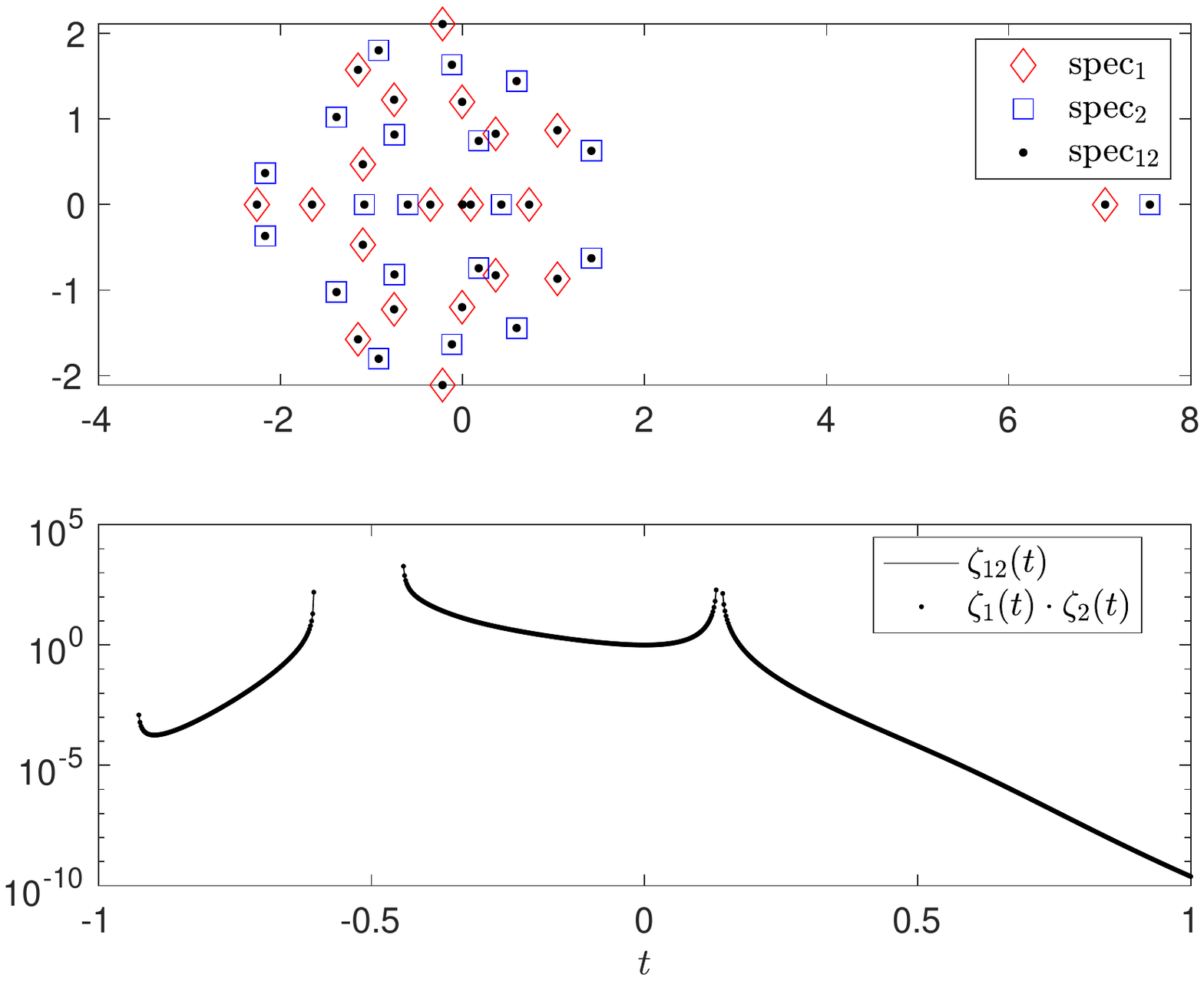} 
  \caption{\label{fig:baz} As in Figure \ref{fig:foo}, but for two flow graphs ${\color{red}D_1}$ and ${\color{blue}D_2}$ on 20 vertices.}
\end{figure}

\end{remark}

Recall that $\max$ furnishes a monoidal structure on the poset $([0,\infty], \ge)$ of extended nonnegative real numbers, and that categories enriched over this are Lawvere ultrametric spaces \cite{nlab:enrichedCategory}. Similarly, $([-\infty,\infty), \le, -\infty, \max)$ is a monoidal poset. This is sufficient data for us to define (following \cite{leinster2017magnitude}) the \emph{magnitude} of ${\bf SubFlow}_D$ over the \emph{max-plus or tropical semiring} \cite{heidergott2014max,nlab:maxPlusAlgebra}. 
\footnote{
It is important to distinguish between the magnitude of ${\bf SubFlow}_D$ as an enriched category and the magnitude of $D$ as a digraph with the usual (asymmetric) notion of distance. Here we are concerned only with the former.
}

Unpacking the details, we have the similarity matrix
\begin{equation}
(Z^\boxtimes_D)_{st} \equiv Z^\boxtimes_D(e_s,e_t) := h(D\langle e_s, e_t \rangle).
\end{equation}
Now if there exist $v$, $w$ satisfying the max-plus matrix (co)weighting equations
\begin{equation}
\max_s [v_s + (Z^\boxtimes_D)_{st} ] = 0 = \max_t [(Z^\boxtimes_D)_{st} + w_t], \nonumber
\end{equation}
then the maxima of $v$ and $w$ coincide and also equal the magnitude of $Z^\boxtimes_D$. Such linear equations can be solved via methods described in \cite{heidergott2014max}, and we simply report the result here: the unique ``principal solutions'' (which may not be \emph{bona fide} solutions in general) are $\hat v_s := -\max_t (Z^\boxtimes_D)_{st}$; $\hat w_t := -\max_s (Z^\boxtimes_D)_{st}$. We therefore obtain the following

\begin{lemma} 
\label{lemma:linear} 
$Z^\boxtimes_D$, and hence ${\bf SubFlow}_D$, has well-defined magnitude $z$ over the max-plus semiring iff 
\begin{equation}
\label{eq:linear}
\max_s [-\max_t (Z^\boxtimes_D)_{st}] = z = \max_t [-\max_s (Z^\boxtimes_D)_{st}]. \qed
\end{equation}
\end{lemma}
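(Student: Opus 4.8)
The plan is to reduce the statement to standard facts about linear systems over the max-plus semiring together with the elementary ``line of algebra'' behind the coincidence of weighting and coweighting sums. Recall first that, by definition, $Z^\boxtimes_D$ has well-defined magnitude exactly when it admits \emph{both} a weighting $w$ (a solution of $\max_t[(Z^\boxtimes_D)_{st}+w_t]=0$ for all $s$) and a coweighting $v$ (a solution of $\max_s[v_s+(Z^\boxtimes_D)_{st}]=0$ for all $t$); in that case a one-line manipulation --- substitute the coweighting equation into $\max_s v_s$, swap the two maxima, and use the weighting equation --- shows $\max_s v_s=\max_t w_t$, and this common value (which is moreover independent of the chosen $w$ and $v$, by the same manipulation) is the magnitude $z$. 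So the task is to decide when a weighting and a coweighting exist and to evaluate that common maximum.

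For the existence question I would invoke the max-plus residuation theory cited in the statement: the principal solution $\hat w$ of the weighting equation is the greatest subsolution, i.e.\ $\hat w_t=-\max_s(Z^\boxtimes_D)_{st}$ always satisfies $\max_t[(Z^\boxtimes_D)_{st}+\hat w_t]\le 0$, and it is a genuine solution precisely when the equation is solvable at all; dually for $\hat v_s=-\max_t(Z^\boxtimes_D)_{st}$. Hence ``$Z^\boxtimes_D$ has a weighting'' is equivalent to ``$\hat w$ is a weighting'', and likewise for coweightings. The forward direction of the lemma is then immediate: if the magnitude $z$ is well-defined, $\hat w$ and $\hat v$ are bona fide weighting and coweighting, so applying the line of algebra to the pair $(\hat v,\hat w)$ gives $\max_s\hat v_s=\max_t\hat w_t=z$, which upon unpacking $\hat v$ and $\hat w$ is exactly the displayed identity.

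The converse --- assuming $\max_s[-\max_t(Z^\boxtimes_D)_{st}]=\max_t[-\max_s(Z^\boxtimes_D)_{st}]$ and deducing that $\hat w$ and $\hat v$ are genuine (co)weightings --- is where the real work lies, and it is the step I expect to be the main obstacle, since for an unstructured max-plus matrix equality of the two principal maxima does \emph{not} force the principal solutions to be solutions. Here one must use the special shape of $Z^\boxtimes_D$: its diagonal is $0$ (the identity hom-objects are single edges, whose topological entropy is the max-plus unit $0$), and topological entropy is monotone under inclusion of sub-flow graphs, so every $D\langle e_s,e_t\rangle$ is a sub-flow graph of both $D\langle e_s,e_\infty\rangle$ and $D\langle e_0,e_t\rangle$, where $e_0$ and $e_\infty$ are the entry and exit edges of $D$ (every edge is reachable from $e_0$ and reaches $e_\infty$). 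This gives the row- and column-maximum identities $\max_t(Z^\boxtimes_D)_{st}=(Z^\boxtimes_D)_{s e_\infty}$ and $\max_s(Z^\boxtimes_D)_{st}=(Z^\boxtimes_D)_{e_0 t}$.

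Combining these identities with the zero diagonal, one pins down $\hat v$ and $\hat w$ explicitly and then checks directly that, under the assumed equality, the maxima defining the weighting (resp.\ coweighting) equation at every index are attained, so $\hat w$ (resp.\ $\hat v$) is a genuine (co)weighting; its maximum is then the magnitude $z$ by the argument of the first paragraph. Carrying out this last verification cleanly --- in particular handling the indices $e_0$ and $e_\infty$, where the hom sub-flow graphs degenerate and the corresponding rows/columns of $Z^\boxtimes_D$ are essentially concentrated on the diagonal --- is the delicate part of the argument, and is where I would spend most of the effort.
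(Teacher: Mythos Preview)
Your forward direction is exactly what the paper intends: the lemma is presented with a \qed\ immediately after the statement, and the preceding paragraph already records the principal-solution formulas $\hat v_s=-\max_t(Z^\boxtimes_D)_{st}$, $\hat w_t=-\max_s(Z^\boxtimes_D)_{st}$ from max-plus residuation. The paper's ``proof'' of Lemma~\ref{lemma:linear} is nothing more than: if a (co)weighting exists then the principal one is one, and the magnitude is the max of either, which unpacks to \eqref{eq:linear}. Crucially, the paper does \emph{not} attempt the converse you worry about; the very next sentence reads ``It is not obvious when such a $z$ can exist,'' and the existence question is resolved only in Theorem~\ref{theorem:flowMagnitude}, via Lemma~\ref{lemma:entropy} and the series decomposition of hom-objects. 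So the work you place inside Lemma~\ref{lemma:linear} is, in the paper's organization, the content of Theorem~\ref{theorem:flowMagnitude}.

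Your proposed route to that converse also has a real gap. You assert that $\max_t(Z^\boxtimes_D)_{st}=(Z^\boxtimes_D)_{s,e_\infty}$ and $\max_s(Z^\boxtimes_D)_{st}=(Z^\boxtimes_D)_{e_0,t}$ on the grounds that every $D\langle e_s,e_t\rangle$ sits inside $D\langle e_s,e_\infty\rangle$ (resp.\ $D\langle e_0,e_t\rangle$). But the hom-objects of $\mathbf{SubFlow}_D$ are \emph{single-entry single-exit} sub-flow graphs; for a generic edge $e_s$ there is no reason for $D\langle e_s,e_\infty\rangle$ to be nonempty at all (an edge interior to a loop body is typically not the entry edge of any SESE region terminating at the global exit), so those row/column maxima need not be attained where you claim. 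The structural fact the paper actually uses is different: any nontrivial $D\langle e_s,e_t\rangle$ factors as $\boxtimes_j D\langle e_{j-1},e_j\rangle$ with minimal factors, and then Lemma~\ref{lemma:entropy} gives $h$ as the maximum over the factors. That is what makes the principal solutions genuine and hence makes \eqref{eq:linear} consistent --- but this is Theorem~\ref{theorem:flowMagnitude}, not Lemma~\ref{lemma:linear}. If you want to keep your monotonicity idea, you would need to replace $e_\infty$ (resp.\ $e_0$) by the exit (resp.\ entry) edge of the \emph{maximal} SESE region having $e_s$ as entry (resp.\ $e_t$ as exit), and then argue carefully that such a region exists; at that point you are essentially redoing the series decomposition anyway.
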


It is not obvious when such a $z$ can exist. However, by Lemma 5 of \cite{huntsman2019multiresolution}, any nontrivial $D\langle e_s, e_t \rangle$ must be of the form $\boxtimes_j D \langle e_{j-1},e_j \rangle$ where the $D \langle e_{j-1},e_j \rangle$ are minimal. Appealing to Lemma \ref{lemma:entropy}, we therefore obtain the following

\begin{theorem} 
\label{theorem:flowMagnitude} 
$Z^\boxtimes_D$, and hence ${\bf SubFlow}_D$, has well-defined magnitude over the max-plus semiring. \qed
\end{theorem}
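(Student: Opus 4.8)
The plan is to apply Lemma~\ref{lemma:linear} and verify condition~\eqref{eq:linear} directly by exhibiting a degenerate row and column of $Z^{\boxtimes}_D$. Write $\epsilon$ for the entry edge and $\xi$ for the exit edge of $D$, and let $s^{*}$ be the source and $t^{*}$ the target of $D$, so that $\epsilon$ issues from $s^{*}$ and $\xi$ terminates at $t^{*}$. By the definition of a flow graph, $t^{*}$ has no outgoing edges and $s^{*}$ has no incoming edges in $D$.

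Because $t^{*}$ is a sink, no walk in $D$ other than the trivial length-one walk begins with $\xi$; hence the induced sub-flow graph $D\langle\xi,e_t\rangle$ equals the one-edge flow graph when $e_t=\xi$ and is empty otherwise. In either case $h\bigl(D\langle\xi,e_t\rangle\bigr)=-\infty$, which is the additive identity of the max-plus semiring: the one-edge adjacency matrix is nilpotent, so its spectral radius is $0$ and $\log 0=-\infty$, while the empty hom-object is assigned this same additive identity by convention. Thus $(Z^{\boxtimes}_D)_{\xi t}=-\infty$ for every $t$. Dually, since $s^{*}$ is a source, no walk other than the trivial one ends with $\epsilon$, so $D\langle e_s,\epsilon\rangle$ is empty for $e_s\neq\epsilon$ and hence $(Z^{\boxtimes}_D)_{s\epsilon}=-\infty$ for every $s$.

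It follows that $\max_t(Z^{\boxtimes}_D)_{\xi t}=-\infty$, so $\min_s\max_t(Z^{\boxtimes}_D)_{st}=-\infty$ and therefore $\max_s\bigl[-\max_t(Z^{\boxtimes}_D)_{st}\bigr]=+\infty$; the dual computation with the $\epsilon$-column gives $\max_t\bigl[-\max_s(Z^{\boxtimes}_D)_{st}\bigr]=+\infty$ as well. Hence both sides of \eqref{eq:linear} equal $+\infty$, and Lemma~\ref{lemma:linear} yields the theorem, with magnitude $z=+\infty$. The factorization of Lemma~5 of \cite{huntsman2019multiresolution} together with Lemma~\ref{lemma:entropy} is not strictly needed for existence, but it supplies the complementary structural fact that $Z^{\boxtimes}_D$ takes only finitely many values --- each the maximum of the entropies of the finitely many minimal sub-flow graphs occurring in some factorization, and each of those minimal sub-flow graphs is itself a hom-object --- so that $\max_{s,t}(Z^{\boxtimes}_D)_{st}$ is attained by a minimal sub-flow graph of largest entropy.

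I expect the main obstacle to be bookkeeping rather than conceptual: pinning down the precise meaning of ``induced sub-flow graph'' and the convention that the empty hom-object is read as the additive identity $-\infty$, and checking the sink/source claims against the reflexive-digraph formalism of footnote~\ref{foot:digraph}. One further point worth flagging is that the magnitude obtained is the degenerate value $+\infty$, forced by the entry and exit edges; to extract a more informative invariant --- for instance by removing $\epsilon$ and $\xi$ from $\textnormal{Ob}(\mathbf{SubFlow}_D)$ --- one would genuinely need the factorization together with Lemma~\ref{lemma:entropy} to match the two iterated maxima, and that is where the real combinatorial content would reside.
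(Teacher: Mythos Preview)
Your approach differs from the paper's, which invokes the factorization into minimal sub-flow graphs (Lemma~5 of \cite{huntsman2019multiresolution}) together with Lemma~\ref{lemma:entropy}; but as written your argument does not establish the theorem, and you partly flag this yourself in calling the outcome ``degenerate''.

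The gap is at the diagonal. You take $h(e)=-\infty$ for the one-edge identity object $e$ and conclude that the entire $\xi$-row (and $\epsilon$-column) of $Z^\boxtimes_D$ is $-\infty$. Under that reading, the weighting equation at $s=\xi$ becomes
\[
\max_t\bigl[(Z^\boxtimes_D)_{\xi t}+w_t\bigr]=\max_t[-\infty+w_t]=-\infty,
\]
which can never equal $0$: no weighting exists at all, and the magnitude is \emph{undefined}, not $+\infty$. Equivalently, $+\infty$ lies outside the max-plus semiring $[-\infty,\infty)$ the paper works over, so invoking Lemma~\ref{lemma:linear} with $z=+\infty$ is not legitimate; the ``principal solution'' $\hat v_\xi=+\infty$ is simply not an element of the semiring, and the lemma's ``iff'' tacitly presumes $z$ lies in it.

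This traces to a conflict with the size-map axiom $\sigma(1)=1_S$, which forces $h(e)=0$ (the multiplicative unit of max-plus), not $-\infty$ (the additive unit). With $h(e)=0$ the diagonal of $Z^\boxtimes_D$ is $0$, your ``identically $-\infty$'' claim fails at $(\xi,\xi)$, and the argument must be reworked. A repaired version along your lines does succeed: row $\xi$ is then $0$ on the diagonal and $-\infty$ elsewhere, so $\max_t (Z^\boxtimes_D)_{\xi t}=0$; since $\max_t (Z^\boxtimes_D)_{st}\ge (Z^\boxtimes_D)_{ss}=0$ for every $s$, one obtains $\max_s[-\max_t (Z^\boxtimes_D)_{st}]=0$, and symmetrically via the $\epsilon$-column, giving magnitude $0$. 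That shortcut would genuinely bypass the factorization, but it is not the argument you wrote. The paper's route via factorization and Lemma~\ref{lemma:entropy} is aimed less at bare existence than at the structure of the full (co)weighting exhibited in Example~\ref{ex:bundle}.
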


\begin{example}
\label{ex:bundle}
Consider a flow graph of the form $D := \otimes_{k = 1}^K \boxtimes_{j = 1}^{J_k} D \langle e_{(j-1,k)}, e_{(j,k)} \rangle$, where $\otimes$ denotes the parallel tensor/composition on ${\bf Flow}$ described in \cite{huntsman2019multiresolution}. For an example, see Figure \ref{fig:bundle}. For convenience, further assume that the program structure trees of $D \langle e_{(j-1,k)}, e_{(j,k)} \rangle$ are all trivial, i.e., there are no nontrivial sub-flow graphs. Then $(Z^\boxtimes_D)_{(j_0,k),(j_1,k)} = \max_{j_0 < j \le j_1} h (D \langle e_{(j-1,k)}, e_{(j,k)} \rangle)$, $(Z^\boxtimes_D)_{-\infty,\infty} = h(D)$, where $\mp \infty$ indicate the entry and exit edges of $D$, and all other entries of $Z^\boxtimes_D$ are trivial. 

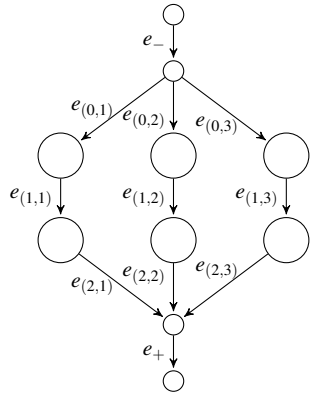
\begin{figure}
	\begin{center}
	\begin{tikzpicture}[scale=0.75,->,>=stealth',shorten >=1pt,every node/.style={transform shape}]
		\node [draw,circle,minimum size=2mm] (vN) at (0,2.5) {};
		\node [draw,circle,minimum size=2mm] (v0) at (0,1.5) {};
		\node [draw,circle,minimum size=8mm] (v11) at (-2,0) {};
		\node [draw,circle,minimum size=8mm] (v21) at (-2,-1.5) {};
		\node [draw,circle,minimum size=8mm] (v12) at (0,0) {};
		\node [draw,circle,minimum size=8mm] (v22) at (0,-1.5) {};
		\node [draw,circle,minimum size=8mm] (v13) at (2,0) {};
		\node [draw,circle,minimum size=8mm] (v23) at (2,-1.5) {};
		\node [draw,circle,minimum size=2mm] (v1) at (0,-3) {};
		\node [draw,circle,minimum size=2mm] (vP) at (0,-4) {};
		\path[->] (vN) edge node [left] {$e_-$} (v0);
		\path[->] (v0) edge node [left] {$e_{(0,1)}$} (v11); 
			\path[->] (v11) edge node [left] {$e_{(1,1)}$} (v21); 
			\path[->] (v21) edge node [left] {$e_{(2,1)}$} (v1);
		\path[->] (v0) edge node [left,pos=0.75] {$e_{(0,2)}$} (v12); 
			\path[->] (v12) edge node [left] {$e_{(1,2)}$} (v22); 
			\path[->] (v22) edge node [left,pos=0.25] {$e_{(2,2)}$} (v1);
		\path[->] (v0) edge node [left,pos=0.75] {$e_{(0,3)}$} (v13); 
			\path[->] (v13) edge node [left] {$e_{(1,3)}$} (v23); 
			\path[->] (v23) edge node [left,pos=0.25] {$e_{(2,3)}$} (v1);
		\path[->] (v1) edge node [left] {$e_+$} (vP);
	\end{tikzpicture}
	\end{center}
	\caption{\label{fig:bundle} Flow graph of the form $D := \otimes_{k = 1}^K \boxtimes_{j = 1}^{J_k} D \langle e_{(j-1,k)}, e_{(j,k)} \rangle$ for $J_k \equiv 2$ and $K = 3$. The large nodes indicate nontrivial interiors of sub-flow graphs.}
\end{figure}

The nontrivial weighting components are therefore 
\begin{equation}
w_{(j,k)} = -\max_{j_0 < j} h (D \langle e_{(j_0,k)}, e_{(j,k)} \rangle) = -\max_{j_0 < j} h (D \langle e_{(j_0,k)}, e_{(j_0+1,k)} \rangle), \nonumber
\end{equation}
while the nontrivial coweighting components are 
\begin{equation}
v_{(j,k)} = -\max_{j_1 > j} h (D \langle e_{(j,k)}, e_{(j_1,k)} \rangle) = -\max_{j_1 > j} h (D \langle e_{(j_1-1,k)}, e_{(j_1,k)} \rangle). \nonumber
\end{equation}
That is, the weighting and coweighting respectively encode the cumulative forward and reverse maxima of the topological entropy along the $K$ ``backbones'' $\boxtimes_{j = 1}^{J_k} D \langle e_{(j-1,k)}, e_{(j,k)} \rangle$ of $D$. In particular, $v_{(j_*-1,k)} = w_{(j_*,k)}$ when $j_* = \arg \max_j h (D \langle e_{(j-1,k)}, e_{(j,k)} \rangle)$.
\end{example}

Finally, it is evident that similar behavior to that detailed in Example \ref{ex:bundle} should occur when each $D \langle e_{(j-1,k)}, e_{(j,k)} \rangle$ is itself of the form $\otimes_m \boxtimes_\ell D' \langle e'_{(\ell-1,m)}, e'_{(\ell,m)} \rangle$, and so on. That is, (co)weightings reliably encode salient features for ``series-parallel'' flow graphs. It seems likely that the same is true for flow graphs that correspond to ``structured'' control flow, which can always be obtained from ``unstructured'' control flow \cite{zhang2004using} in the event that it makes any practical difference.

Operationally, the (co)weighting identifies regions of high topological entropy.
\footnote{
NB. Both $Z^\boxtimes_D$ and its (co)weighting are efficiently computable, as is any necessary preprocessing/restructuring of $D$.
}
This echoes the observations of \cite{bunch2020practical} that (co)weightings pick out salient features of Euclidean point clouds (e.g., ``strata'' of sampled psuedomanifolds). In turn, this suggests a strategy for ``anchoring'' graph matching methods for related flow graphs (e.g., for different versions of the same program or business process). Namely, iteratively coarsen suitably (re)structured flow graphs using the technique of \cite{huntsman2019multiresolution}, attempting to match regions of high topological entropy at each stage of the process.
Recalling Example \ref{ex:bundle}, suppose that $D' := \otimes_{k = 1}^K \boxtimes_{j = 1}^{J_k} D' \langle e'_{(j-1,k)}, e'_{(j,k)} \rangle$ is somehow related to $D$. 
We can hope to leverage the respective (co)weightings for graph matching between $D$ and $D'$.

\section{\label{sec:Neighborhoods}Magnitudes of balls in the universal cover of a digraph}

For a finite strong digraph, a ball around any vertex (defined by, e.g. distance to or from that vertex) eventually saturates. It is helpful to shift perspectives to the \emph{universal cover} \cite{dorfler1980covers} to avoid this saturation while using a notion of the size of these balls to characterize the digraph.
\footnote{
For the conventional notion of a universal cover in topology, see \cite{hatcher2002algebraic,ghrist2014elementary}.
}
This perspective shift is motivated by the context of a (compact connected) Riemannian manifold, for which the \emph{volume entropy} \cite{manning1979topological} is defined via $\lim_{r \uparrow \infty} r^{-1} \log \text{vol} (B_x(r))$, where $B_x(r)$ is the ball of radius $r$ around a point $x$ in the universal cover of the manifold. It turns out that the volume entropy is independent of the point $x$. Also, the volume entropy is bounded above by the topological entropy of the geodesic flow, with equality in the case of nonpositive sectional curvature. Proposition \ref{prop:volumeEntropy} is a very close analogue of this result.
\footnote{
There is a kind of volume entropy for metric graphs \cite{lim2008minimal,lee2019volume} (see also \cite{karam2015growth}), but we are unaware of a digraph analogue.
}

Returning to the context of digraphs, the universal cover of a digraph is a \emph{polytree}, (i.e., an acyclic digraph whose corresponding undirected graph is a tree) that ``locally looks like the digraph everywhere.'' A telling advantage of this construction is that (at the cost of implicitly encoding structure) it renders the calculation of magnitude functions trivial:

\begin{lemma}
\label{lem:polyforestMagnitude}
Let $F$ be a \emph{polyforest}, i.e., an acyclic digraph whose corresponding undirected graph is a forest. Then the magnitude function of $F$ (i.e., the magnitude of $\exp[-td]$ where $d$ is the usual Lawvere metric on $F$) is $|V(F)| - |E(F)| e^{-t}$.
\footnote{Note that if $F$ is a polytree, then $|V(F)| = |E(F)|+1$.} 
\end{lemma}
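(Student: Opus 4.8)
The plan is to compute the magnitude of a polyforest $F$ directly from the definition, by exhibiting an explicit weighting $w$ satisfying $Zw = \mathbf{1}$ for the similarity matrix $Z = \exp[-td]$, and then summing its components. The key structural fact I would exploit is that in a polyforest, between any two vertices $j,k$ there is \emph{at most one} directed path, so the Lawvere (quasi)metric $d(j,k)$ is either the length of that unique directed path or $+\infty$ if none exists. Consequently $Z_{jk} = e^{-t\,d(j,k)}$ is $e^{-t\ell}$ when there is a directed path of length $\ell$ from $j$ to $k$, and $0$ otherwise. The upshot is that $Z$ is, up to relabeling vertices in a topological order, upper triangular with $1$'s on the diagonal, hence invertible over $\mathbb{R}$, so a unique weighting exists and the magnitude is well-defined for every $t$.

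The heart of the argument is guessing the weighting. Here I would use the standard "inclusion–exclusion over covers" intuition: I claim $w_k = 1 - (\deg^-_F(k))\,e^{-t}$ is wrong in general (a vertex can be reached by long paths too), so instead I would work with the Möbius-function / path-counting structure. Write $N_\ell(j,k)$ for the number of directed paths of length $\ell$ from $j$ to $k$; in a polyforest $N_\ell(j,k) \in \{0,1\}$. Let $A$ be the adjacency matrix; then $Z = \sum_{\ell \ge 0} e^{-t\ell} A^\ell = (I - e^{-t}A)^{-1}$, where the sum is finite because $F$ is acyclic so $A$ is nilpotent. Therefore the weighting is simply $w = Z^{-1}\mathbf{1} = (I - e^{-t}A)\mathbf{1}$, i.e. $w_k = 1 - e^{-t}\sum_{k'} A_{kk'} = 1 - e^{-t}\deg^+_F(k)$, the out-degree. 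Summing, $\sum_k w_k = |V(F)| - e^{-t}\sum_k \deg^+_F(k) = |V(F)| - e^{-t}|E(F)|$, since the out-degrees sum to the number of edges. (One can equally use $Z^{-1} = I - e^{-t}A$ acting on the left to get the coweighting $v_k = 1 - e^{-t}\deg^-_F(k)$, and the two sums agree because $\sum_k \deg^+ = \sum_k \deg^- = |E(F)|$, which is a nice consistency check and re-proves that weighting and coweighting sums coincide.)

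So the steps, in order, are: (1) observe acyclicity forces $A$ nilpotent, hence $I - e^{-t}A$ invertible with inverse $\sum_{\ell\ge 0}e^{-t\ell}A^\ell$; (2) observe that in a polyforest this matrix-geometric series is exactly $Z = \exp[-td]$ entrywise, using uniqueness of directed paths so that $(A^\ell)_{jk} = N_\ell(j,k) \in\{0,1\}$ equals the indicator that $d(j,k)=\ell$ (and $d(j,k)=\infty \Leftrightarrow$ all $N_\ell(j,k)=0$); (3) conclude $w = (I-e^{-t}A)\mathbf{1}$ is the unique weighting; (4) sum components using $\sum_k \deg^+_F(k) = |E(F)|$. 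I expect step (2) to be the only place needing genuine care: I must argue that the underlying-graph-is-a-forest hypothesis really does preclude two distinct directed paths between the same ordered pair of vertices — if there were two such directed paths, their union would contain an (undirected) cycle in $F$, contradicting that $F$'s underlying graph is a forest. Everything else is then linear algebra and bookkeeping. The parenthetical remark about polytrees is immediate since a tree on $|V|$ vertices has $|V|-1$ edges, giving magnitude function $|V(F)|(1 - e^{-t}) + e^{-t}$, which tends to $1$ as $t\to\infty$ and to $|V(F)|$ — no, to $0$; rather one notes $t\to 0$ gives $|V(F)|-|E(F)|$, the number of connected components, matching the known behavior of magnitude at scale zero.
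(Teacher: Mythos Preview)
Your argument is correct and takes a genuinely different route from the paper. The paper merely sketches an adaptation of Leinster's inclusion--exclusion argument for undirected forests (decompose along cut-vertices, using that a single directed edge has magnitude $2-e^{-t}$), whereas you compute $Z^{-1}$ directly by recognizing that acyclicity makes $A$ nilpotent and that the polyforest hypothesis forces $(A^\ell)_{jk} = [d(j,k)=\ell]$, so that $Z = (I-e^{-t}A)^{-1}$ on the nose. Your approach is more self-contained---no need to port Leinster's projecting-decomposition lemma to the directed setting---and it yields the explicit weighting $w_k = 1 - e^{-t}\deg^+_F(k)$ and coweighting $v_k = 1 - e^{-t}\deg^-_F(k)$ as a byproduct; the paper's approach, by contrast, makes the additivity of magnitude under one-point gluings visible and would generalize more readily to other inductive decompositions. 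One minor slip, irrelevant to the proof itself: in your polytree aside the limits are reversed---$|V(F)|(1-e^{-t}) + e^{-t}$ tends to $|V(F)|$ as $t\to\infty$ and to $1$ as $t\to 0$ (consistent with your correct general observation that the $t\to 0$ limit is $|V(F)|-|E(F)|$, the number of components).
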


\begin{proof}[Proof (sketch).] The proof can be adapted almost wholesale from an analogous result for undirected trees (or for that matter, forests) in \S 4 of \cite{leinster2019magnitude}: apart from checking and slightly adjusting definitions, the key observation is that the magnitude function of a digraph with a single (directed) edge is $2-e^{-t}$ (by comparison, the magnitude function of a graph with a single edge is $2(1+e^{-t})^{-1}$).
\end{proof}

The universal cover $U_D := (V_U,E_U)$ of a weak digraph $D = (V,E)$ is a polytree defined as follows \cite{dorfler1980covers}: pick $v_0 \in V$ and set
\begin{equation}
V_U := \{(v_0,v_1,\dots,v_L) : (v_{j-1},v_j) \in E; v_{j-1} \ne v_j\} \cup \{(v_L,v_{L-1},\dots,v_0) : (v_j,v_{j-1}) \in E; v_j \ne v_{j-1}\} \nonumber
\end{equation}
where $v_j \in V$ and $e_j \in E$ identically; and set 
\begin{align}
E_U := \ & \{((v_0,v_1,\dots,v_{L-1}),(v_0,v_1,\dots,v_L)) : (v_{L-1},v_L) \in E \} \nonumber \\ 
& \cup \{((v_0,v_1,\dots,v_L),(v_0,v_1,\dots,v_{L-1})) : (v_L,v_{L-1}) \in E \}. \nonumber
\end{align}

\begin{example}
\label{ex:cover}
Consider the digraph $D$ in the left panel of Figure \ref{fig:cover}. Its universal cover has local structure shown in the right panel of Figure \ref{fig:cover}, and the covering map is depicted in Figure \ref{fig:coverProjection} (which also shows a larger local region of the universal cover).

\begin{figure}[h]
  \centering
  \includegraphics[trim = 92mm 120mm 88mm 120mm, clip, width=.15\textwidth,keepaspectratio]{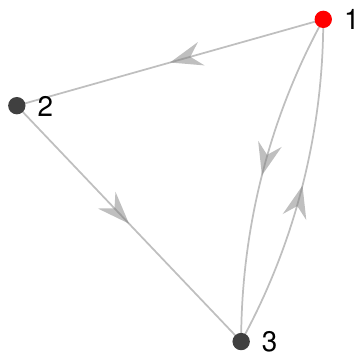} \quad \quad
  \includegraphics[trim = 63mm 120mm 46mm 120mm, clip, width=.5\textwidth,keepaspectratio]{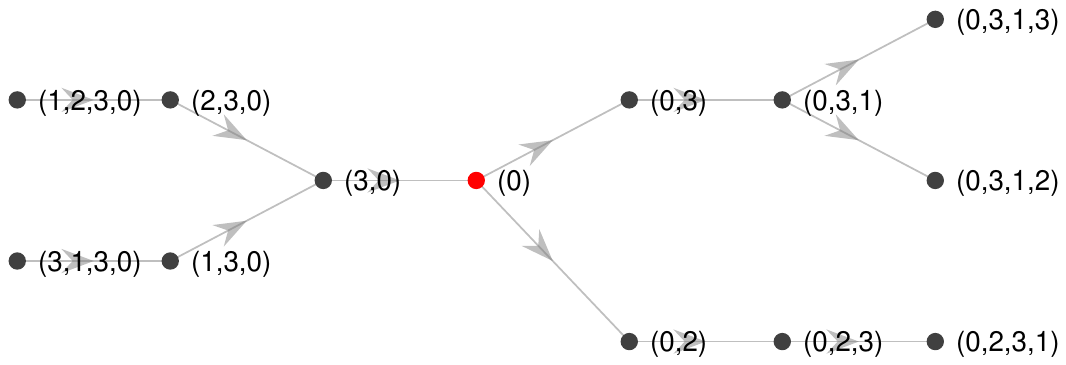}
  \caption{\label{fig:cover} (L) A strong loopless digraph $D$ with {\color{red}basepoint $v_0 = 1$ highlighted in red}. (R) The portion of $U_D$ with vertices at distance $\le 3$ to or from {\color{red}$v_0$}. Vertices of $U_D$ are labeled by the corresponding sequence of $D$-vertices, with $0$ explicitly indicating the basepoint. The ball $B_0(3)$ is formed by taking the arborescence of depth $3$ rooted at $0$, i.e., the right-hand part.}
\end{figure}

\begin{figure}[h]
  \centering
  \includegraphics[trim = 88mm 105mm 65mm 105mm, clip, width=.29\textwidth,keepaspectratio]{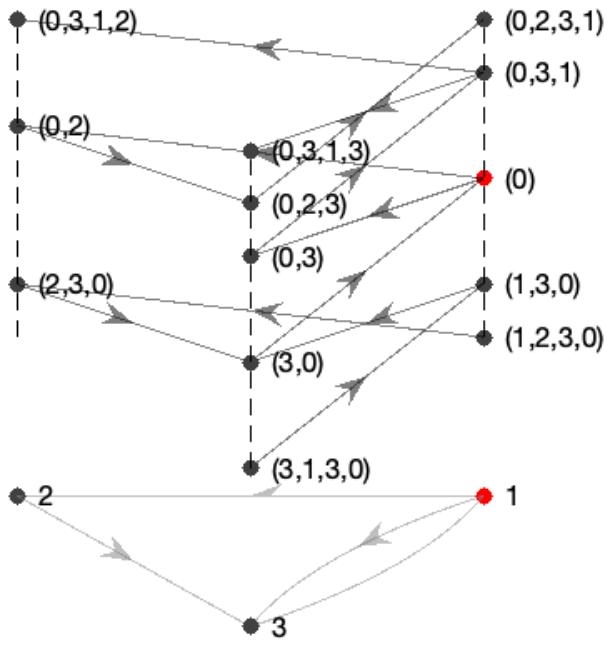}
  \includegraphics[trim = 60mm 120mm 55mm 120mm, clip, width=.69\textwidth,keepaspectratio]{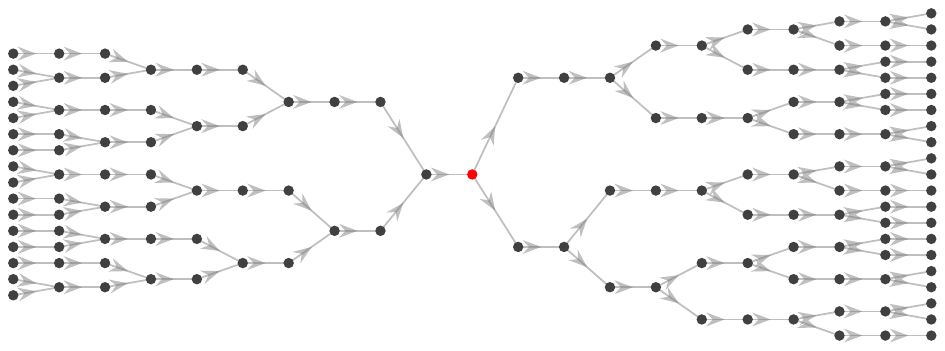}
  \caption{\label{fig:coverProjection} (Cf. Figure \ref{fig:cover}.) (L) The portion of $U_D$ with vertices at distance $\le 3$ to or from {\color{red}$v_0$} with covering of $D$ (at bottom) indicated. (R) The portion of $U_D$ with vertices at distance $\le 10$ to or from {\color{red}$v_0$}.}
\end{figure}

\end{example}

\begin{proposition}
Let $\gamma \in V_U$. Then there is either a unique path in $U_D$ from $v_0$ to $\gamma$ or \emph{vice versa}. \qed
\end{proposition}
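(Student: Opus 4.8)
The plan is to read the required dichotomy directly off the two-part description of $V_U$ and to get uniqueness essentially for free from the fact, recorded above, that $U_D$ is a polytree. Since a polytree is acyclic with underlying undirected graph a tree, there is at most one undirected simple path between any two of its vertices; and since a directed path in an acyclic digraph never repeats a vertex, it is in particular an undirected simple path. Hence there is at most one directed path between any ordered pair of vertices of $U_D$. Moreover, for $\gamma \neq v_0$ one cannot simultaneously have a directed path from $v_0$ to $\gamma$ and one from $\gamma$ to $v_0$: both would traverse the unique underlying undirected path between $v_0$ and $\gamma$, in opposite senses, forcing its first edge to appear with both orientations in $U_D$ and producing a directed $2$-cycle, which is impossible. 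So the entire content of the statement is the \emph{existence} of a directed path in one of the two directions.

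For existence I would argue straight from the construction of $V_U$. If $\gamma = (v_0)$ the empty path works either way, so suppose $\gamma$ has length $L \geq 1$. By definition $\gamma$ is either a directed walk in $D$ issuing from $v_0$ or a directed walk terminating at $v_0$. In the first case write $\gamma = (v_0, v_1, \dots, v_L)$ with $(v_{i-1}, v_i) \in E$ and put $\gamma^{(i)} := (v_0, \dots, v_i)$; each $\gamma^{(i)}$ is again a directed walk from $v_0$, hence lies in $V_U$, and the first clause in the definition of $E_U$ (extension of a walk by the edge $(v_{i-1}, v_i)$) yields an edge $\gamma^{(i-1)} \to \gamma^{(i)}$ of $U_D$. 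Then $v_0 = \gamma^{(0)}, \gamma^{(1)}, \dots, \gamma^{(L)} = \gamma$ is a directed path from $v_0$ to $\gamma$. In the second case the mirror-image argument — successively deleting the vertex of $\gamma$ farthest from $v_0$ and invoking the second clause of $E_U$ — gives a directed path from $\gamma$ to $v_0$.

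Combining the two halves completes the proof: every $\gamma \in V_U$ admits a directed path to or from $v_0$; that path is unique because $U_D$ is a tree; and when $\gamma \neq v_0$ only one direction can occur because $U_D$ is acyclic. The one point deserving care is the basepoint together with any sequence that is of both types at once (a closed walk through $v_0$): here one uses the convention implicit in the construction that $v_0 = (v_0)$ is the only vertex shared by the ``forward'' and ``reverse'' parts of $U_D$ — namely the out- and in-arborescences rooted at $v_0$ — so that the dichotomy in the statement matches the splitting of $V_U$ exactly. I expect this bookkeeping about the overlap, rather than the path constructions themselves, to be the only real friction in writing out the details.
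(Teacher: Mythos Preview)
The paper gives no proof at all --- the proposition ends with \qed immediately after the statement. Your argument is correct and supplies precisely the details the paper elides: existence is read off the two clauses defining $V_U$ (each $\gamma$ is a walk from or to $v_0$, and successive truncations furnish the path in $U_D$), while uniqueness and the exclusivity of the two directions follow from the polytree property of $U_D$ already recorded in the paper. Your caution about the potential overlap of the ``forward'' and ``reverse'' halves of $V_U$ at closed walks through $v_0$ is well-placed; the paper's definition is terse enough that this bookkeeping is indeed the only point requiring care, and you have identified it correctly.
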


The number of paths from $v_0$ of length $L$ in $U_D$ equals the number of loopless paths from $v_0$ of length $L$ in $D$. Define $B_{v_0}(L)$ to be the sub-polytree of $U_D$ (defined with basepoint $v_0$) induced by its vertices at (the usual notion of digraph) distance $\le L$ from (versus to) $v_0$. We can compute the magnitude function of $B_{v_0}(L)$ very easily using the following proposition.

\begin{proposition}
If $D$ is loopless, then $B_{v_0}(L)$ is an arboresence with $|V(B_{v_0}(L))| = \sum_{\ell = 0}^L \sum_k (A^\ell)_{jk}$, where $A$ is the adjacency matrix of $D$ and $j$ is the matrix index corresponding to $v_0$. \qed
\end{proposition}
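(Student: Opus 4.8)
The plan is to realize $B_{v_0}(L)$ concretely as the tree of out-paths of $D$ of length at most $L$ emanating from $v_0$, and then to count its vertices by the standard identity relating walk counts to powers of the adjacency matrix. Throughout I would use that $U_D$ is a polytree and that, by the Proposition above, each $\gamma \in V_U$ is joined to $v_0$ by a unique directed path in one direction or the other.

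First I would pin down the vertex set of $B_{v_0}(L)$. The tuples in $V_U$ come in two families: the ``out'' tuples $(v_0,v_1,\dots,v_\ell)$ with $(v_{i-1},v_i)\in E$ (and $v_{i-1}\ne v_i$), and the ``in'' tuples, which represent paths \emph{into} $v_0$. An out-tuple $(v_0,\dots,v_\ell)$ is linked to the basepoint $(v_0)$ by the directed path $(v_0)\to(v_0,v_1)\to\cdots\to(v_0,\dots,v_\ell)$ built from edges of the first family of $E_U$, so it lies at distance exactly $\ell$ from $v_0$ (uniqueness of the distance being a polytree property). Conversely every $E_U$-edge out of an out-tuple lands on another out-tuple, since the first family of $E_U$ only lengthens out-tuples and no in-tuple is the source of an edge into an out-tuple; hence $v_0$ reaches precisely the out-tuples, and $B_{v_0}(L)$ is the subgraph of $U_D$ induced on the out-tuples of length $\le L$.

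Next I would verify the arborescence claim. On this vertex set the only $E_U$-edges are the prefix extensions $(v_0,\dots,v_{\ell-1})\to(v_0,\dots,v_\ell)$ with $\ell\le L$, so each non-root vertex has exactly one in-edge (delete the last coordinate), the graph is connected (the prefixes of any out-tuple are all present and lead back to the root $(v_0)$), and it is acyclic as a subgraph of the polytree $U_D$; an acyclic connected digraph in which a distinguished vertex reaches all others along edges directed away from it is an arborescence. Finally I would count: the out-tuples of length exactly $\ell$ are the sequences $v_0,v_1,\dots,v_\ell$ with $(v_{i-1},v_i)\in E$, and because $D$ is loopless the condition $v_{i-1}\ne v_i$ is automatic, so these are exactly the length-$\ell$ walks of $D$ starting at $v_0$ (equivalently, by the Proposition above, the loopless such walks), of which there are $\sum_k (A^\ell)_{jk}$ with $j$ the index of $v_0$; summing over $0\le\ell\le L$ gives the stated formula. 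The one step that needs genuine care is the determination of the vertex set: one must check that ``distance $\le L$ \emph{from} $v_0$'' picks out the out-tuples only and that no extraneous $E_U$-edge survives in the induced subgraph, so that the prefix-tree description is exact; the remaining steps are bookkeeping, with looplessness entering only to turn the loopless-walk count into $\sum_k (A^\ell)_{jk}$.
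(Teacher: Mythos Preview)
The paper states this proposition without proof (it is marked \qed\ immediately after the statement), treating it as self-evident. Your argument correctly supplies the omitted details via the only natural route---identifying $B_{v_0}(L)$ with the prefix-tree of out-tuples of length $\le L$ and then counting walks by powers of $A$---so there is nothing to compare against beyond noting that your write-up is strictly more detailed than the paper's.
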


\begin{remark}
By comparison, the \emph{Katz centrality} is $\sum_{\ell = 1}^\infty \alpha^\ell \sum_i (A^\ell)_{ij}$, where $\alpha$ is restricted to ensure convergence \cite{grindrod2013matrix}. The Katz centrality of the graph with all edges reversed is therefore $\sum_{\ell = 1}^\infty \alpha^\ell \sum_k (A^\ell)_{jk}$.
\end{remark}

Since an arborescence (or more generally a polytree) has one more vertex than it has edges, Lemma \ref{lem:polyforestMagnitude} yields that for $D$ loopless, the magnitude function of $B_{v_0}(L)$ is 
\begin{equation}
\operatorname{Mag} (B_{v_0}(L),t) = |V(B_{v_0}(L))| - (|V(B_{v_0}(L))|-1)e^{-t},
\end{equation}
and the most recent proposition gives an elementary algorithm for computing $|V(B_{v_0}(L))|$. If $D$ is loopless and strong, we have $h(D) = h(U_D) =: \lim_{L \uparrow \infty} L^{-1} \log |V(B_{v_0}(L))|$ independent of the basepoint $v_0$. 

\begin{proposition}
\label{prop:volumeEntropy}
Let $D$ be a strong loopless digraph and $v_0 \in V(D)$. Then 
\begin{equation}
\lim_{L \uparrow \infty} L^{-1} \log \operatorname{Mag} (B_{v_0}(L),t) \le h(D)
\end{equation}
with equality at $t = \infty$, and the left hand side is independent of $v_0$ for any $t$. Here $\operatorname{Mag} (\cdot,t)$ denotes the magnitude function of the first argument. \qed
\end{proposition}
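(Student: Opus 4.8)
The plan is to reduce the proposition to a short squeeze argument on the growth of $N_L := |V(B_{v_0}(L))|$. The point of departure is the identity recorded just above the statement: since $B_{v_0}(L)$ is an arborescence (so $|E| = |V|-1$), Lemma~\ref{lem:polyforestMagnitude} gives
\[
\operatorname{Mag}(B_{v_0}(L),t) \;=\; N_L - (N_L-1)e^{-t} \;=\; (1-e^{-t})\,N_L + e^{-t},
\]
and from the discussion preceding the proposition we also have $\lim_{L\uparrow\infty} L^{-1}\log N_L = h(U_D) = h(D)$, with this limit independent of the basepoint $v_0$. Throughout I take $t\in[0,\infty]$, for which $\operatorname{Mag}(B_{v_0}(L),t)\ge 1>0$, so the logarithm is always legitimate.

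First I would dispose of the two endpoints directly from the displayed formula. At $t=\infty$ it collapses to $\operatorname{Mag}(B_{v_0}(L),\infty)=N_L$, whence the left-hand side is exactly $\lim_L L^{-1}\log N_L=h(D)$ --- this is the asserted equality. At $t=0$ it collapses to the constant $\operatorname{Mag}(B_{v_0}(L),0)\equiv 1$, whence the left-hand side is $0$, which is $\le h(D)$ because $D$ strong forces $h(D)\ge 0$.

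Next, for $0<t<\infty$ I would use $0<1-e^{-t}\le 1$ together with $N_L\ge 1$ to sandwich
\[
(1-e^{-t})\,N_L \;\le\; \operatorname{Mag}(B_{v_0}(L),t) \;\le\; N_L+1 \;\le\; 2N_L .
\]
Taking logarithms, dividing by $L$, and letting $L\uparrow\infty$, the $L^{-1}\log(1-e^{-t})$ and $L^{-1}\log 2$ terms vanish, so both outer limits equal $\lim_L L^{-1}\log N_L = h(D)$; hence the middle limit exists and equals $h(D)$, and in particular is $\le h(D)$. (This actually gives equality for every $t>0$, marginally more than claimed.) Basepoint-independence is then automatic: for each $t$ the left-hand side is one of $0$ or $h(D)$, and both are independent of $v_0$ since $h(U_D)=\lim_L L^{-1}\log N_L$ is.

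The only genuinely substantive ingredient is the equality $h(U_D)=h(D)$ together with its independence of the chosen lift of $v_0$ --- this is what makes the bound sharp and the answer basepoint-free --- but it is already supplied in the run-up to the proposition (ultimately Perron--Frobenius for the irreducible adjacency matrix of the strong digraph $D$, via the bijection between vertices of $B_{v_0}(L)$ and loopless walks of length $\le L$ out of $v_0$). Granting that, there is no real obstacle left; the rest is the one-line estimate above. If one instead wanted the proposition self-contained, re-deriving that growth-rate fact would be essentially the whole job.
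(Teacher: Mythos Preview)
Your argument is correct and follows exactly the route the paper intends: the proposition is stated with a bare \qed, relying on the immediately preceding formula $\operatorname{Mag}(B_{v_0}(L),t)=N_L-(N_L-1)e^{-t}$ and the identification $\lim_L L^{-1}\log N_L=h(U_D)=h(D)$, and your squeeze $(1-e^{-t})N_L\le \operatorname{Mag}\le 2N_L$ is the natural way to make that implicit step explicit. Your observation that equality in fact holds for every $t>0$ is also correct and matches what the paper records in the caption to Figure~\ref{fig:coverConvergence}.
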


\begin{example}
\label{ex:cover2}
Continuing Example \ref{ex:cover}, $h(D) \approx 0.2812$ is the logarithm of the so-called \emph{plastic number}, i.e., the unique real solution of $x^3-x-1 = 0$. Numerics suggest that $|V(B_1(L))|$ is given by \cite{oeisA167385}. Assuming this to obtain values for large $L$, we show the convergence of $L^{-1} \log \operatorname{Mag} (B_{v_0}(L),t)$ in Figure \ref{fig:coverConvergence}.

\begin{figure}[h]
  \centering
  \includegraphics[trim = 45mm 109mm 40mm 106mm, clip, width=.55\textwidth,keepaspectratio]{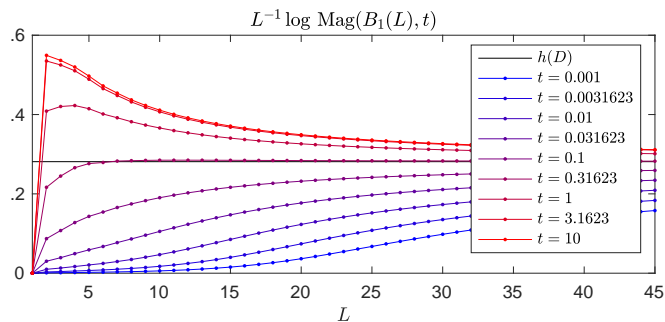}
  \caption{\label{fig:coverConvergence} $L^{-1} \log \operatorname{Mag} (B_{v_0}(L),t) \rightarrow h(D)$ for $t > 0$, but depends strongly on $t$ even for fairly large $L$.}
\end{figure}

\end{example}

\section{\label{sec:Example}Example: correlated features for digraph matching}

In this section we detail how log-magnitudes of small balls associated to the Lawvere metric structure on a digraph are both interesting and useful from the perspective of feature engineering; for completeness and comparison, we start by considering the ambient (co)weighting. In keeping with the general theme of providing tools for graph matching, we focus on the import graph of the \texttt{Flare} software hierarchy, accessed from \url{https://observablehq.com/@d3/hierarchical-edge-bundling/2} in November 2020 and depicted in Figure \ref{fig:flare}. 

\begin{figure}[h]
  \centering
  \includegraphics[trim = 0mm 50mm 0mm 40mm, clip, width=\textwidth,keepaspectratio]{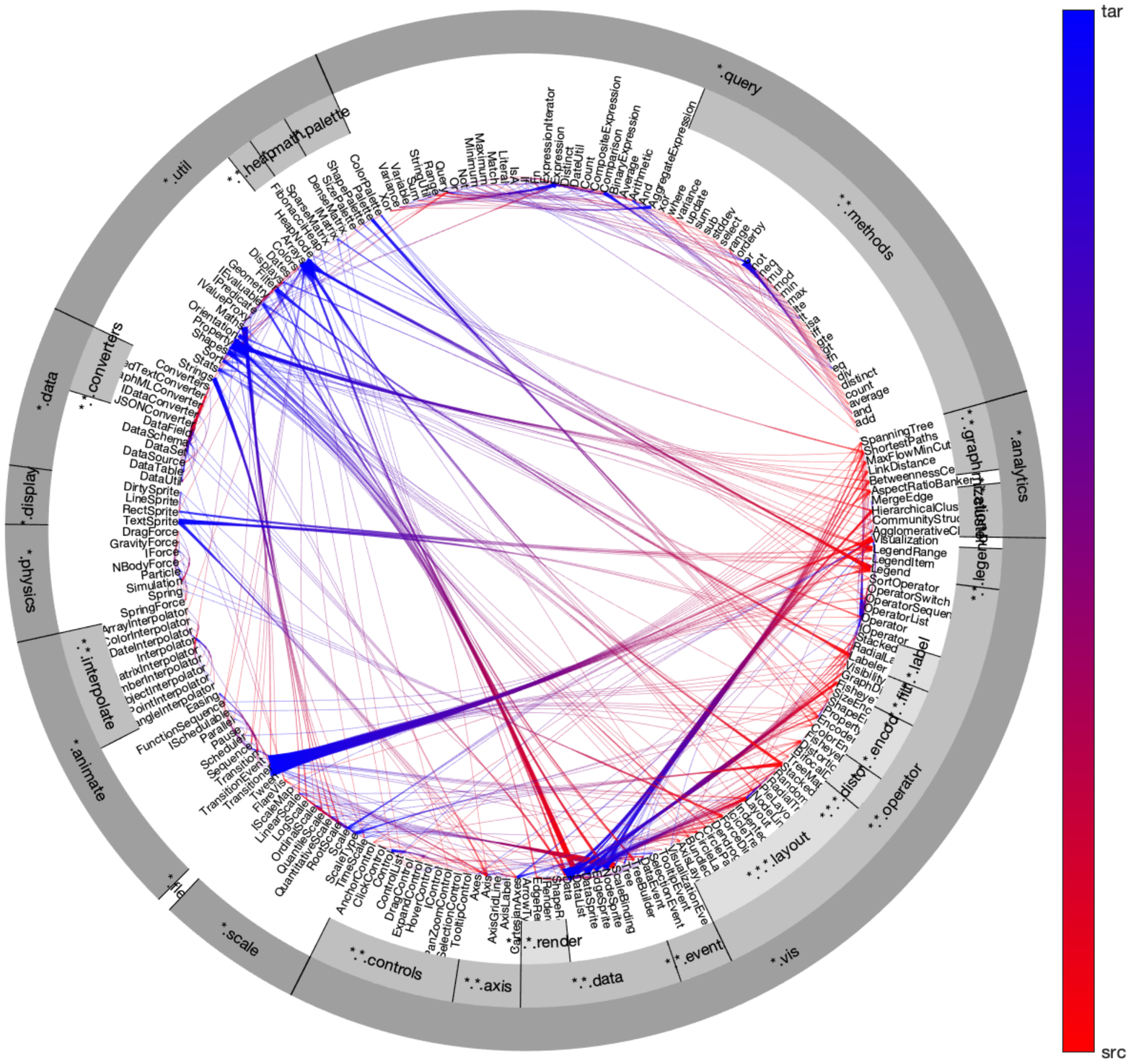}
  \caption{Import graph of the \texttt{Flare} software hierarchy, displayed using the divided edge bundling approach of \cite{selassie2011divided}. Edge {\color{red}sources} and {\color{blue}targets} are respectively colored {\color{red}red} and {\color{blue}blue}; the hierarchy is depicted along the figure periphery (and without any material loss of information from occlusions).}
  \label{fig:flare}
\end{figure}

As an experiment, we considered $N = 100$ realizations of a pair of random subgraphs of the ``ambient'' digraph of Figure \ref{fig:flare} obtained by removing edges with probability $3/4$ and then retaining the largest weak component. 
We then computed the (co)weightings at scale 0, the log-magnitudes of balls of radius $\le 3$ at scale $t = 100$ (which is virtually equivalent to $t = \infty$), and various common vertex centrality measures. For each of these quantities and $N$ realizations, we then computed the correlation coefficients on vertices shared by the pair of subgraphs. The results are shown in Figure \ref{fig:centralitiesFlare}, which shows that the coweighting and log-magnitudes of balls in the universal cover of the digraph with edges reversed are very strongly correlated. This suggests the utility of such features for graph comparison \cite{wills2020metrics} and matching \cite{emmert2016fifty}.
\footnote{
For a naive approach of matching nodes based on rankings derived from centralities, see Figure 7 of \cite{sato2020fast}.
}

\begin{figure}[h]
  \centering
  \includegraphics[trim = 20mm 85mm 20mm 85mm, clip, width=.8\textwidth,keepaspectratio]{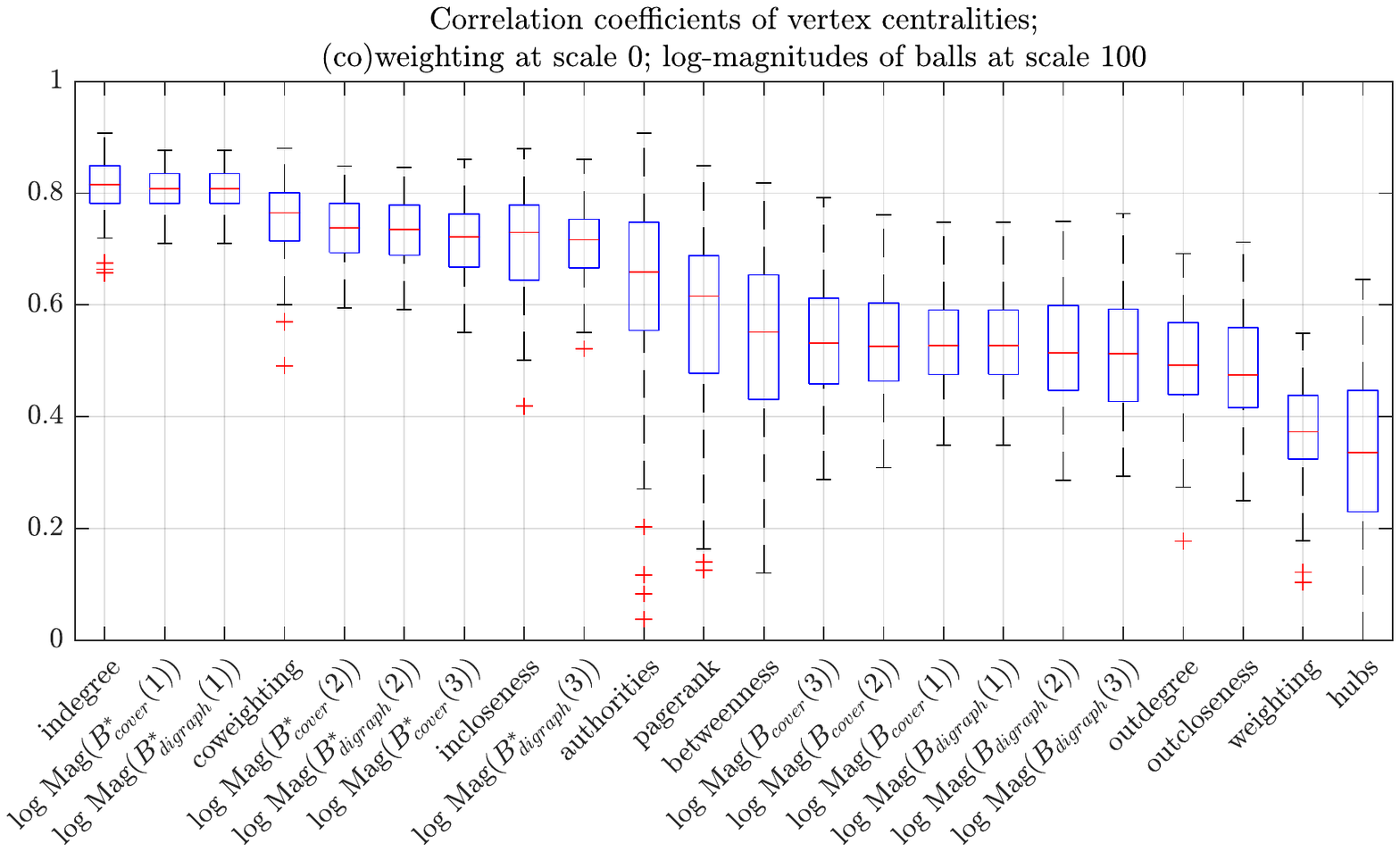}
  \caption{Distribution of correlations for various centralities between two random subgraphs of the digraph in Figure \ref{fig:flare}. * indicates a ball in the digraph with all edges reversed. As $L$ increases, boundary effects cause the log-magnitudes of balls in the universal cover to become (slightly) more correlated to each other than the log-magnitudes of balls in the digraph itself. Note that the three best-performing centralities are computing almost exactly the same thing.}
  \label{fig:centralitiesFlare}
\end{figure}

The strong correlations of log-magnitudes of balls are more robust than those of (co)weightings, as an experiment along the same lines as above but using different realizations of an Erd\H{o}s-Reny\'i digraph ($n = 100$ vertices; edge probability $q = 4/n$) as the ambient digraph for each of $N = 100$ trials shows. We formed two subgraphs by removing edges with probability $1/2$, then retaining the largest weak component. Figure \ref{fig:centralitiesER} shows the results, which are qualitatively echoed for different parameters.

\begin{figure}[h]
  \centering
  \includegraphics[trim = 20mm 85mm 20mm 85mm, clip, width=.8\textwidth,keepaspectratio]{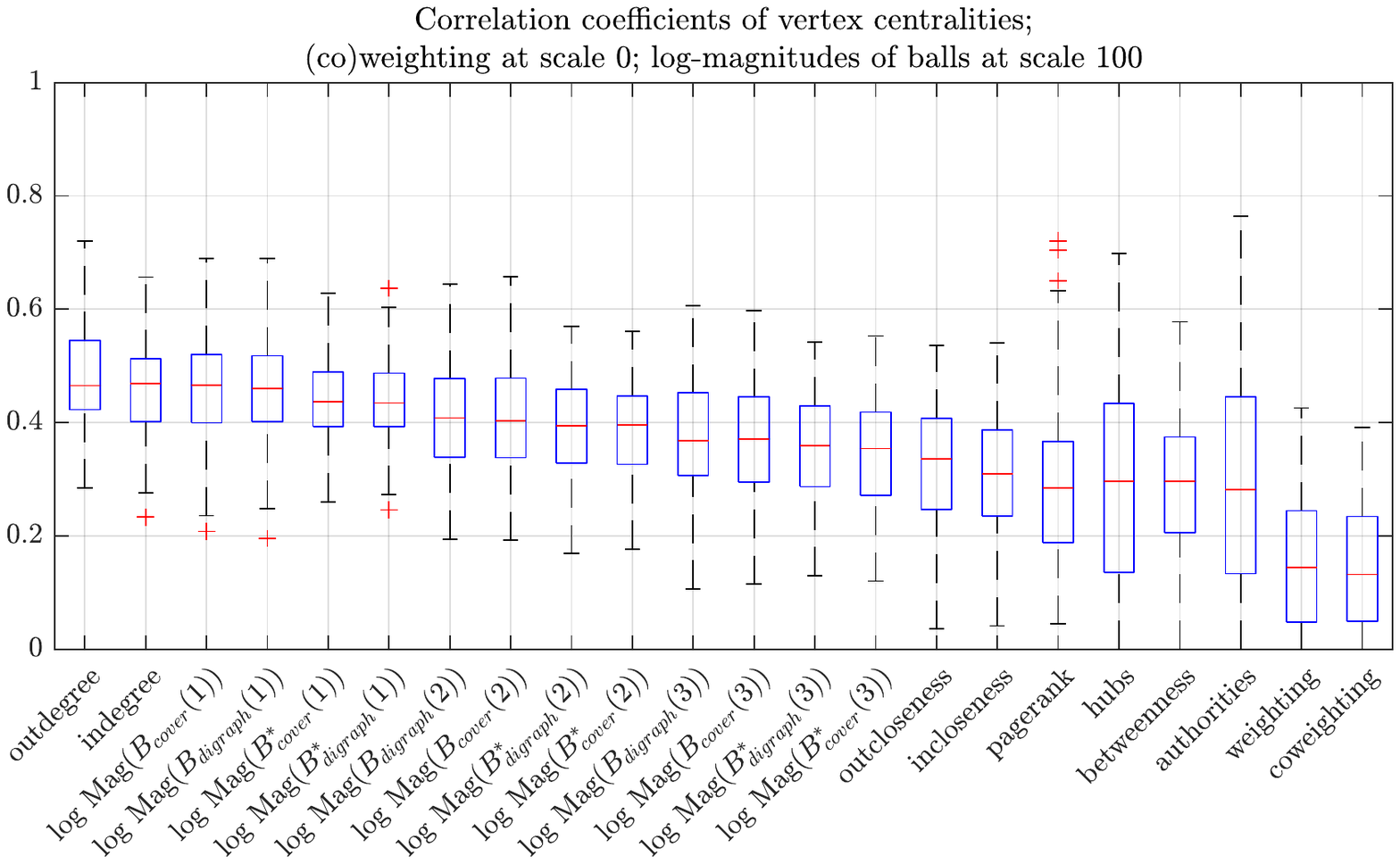}
  \caption{Distribution of correlations for various centralities between two random subgraphs of distinct Erd\H{o}s-Reny\'i digraphs with $n = 100$ vertices and edge probability $q = 4/n$; cf. Figure \ref{fig:centralitiesFlare}.}
  \label{fig:centralitiesER}
\end{figure}

One theoretical advantage of using log-magnitudes of balls is that unlike (co)weightings, these are nonnegative by construction.
\footnote{
NB. It is possible using elementary bounds to efficiently determine the minimal $t$ such that a [co]weighting of $Z = \exp[-td]$ is nonnegative \cite{huntsman2022diversity}. 
} 
\footnote{
Note that from the point of view of correlation analyses, log-magnitudes are more interesting than the magnitudes themselves: because the correlation coefficient is invariant under affine transformations of either argument, we have that $\rho(\operatorname{Mag}(B_{v_0}(L),t), \operatorname{Mag}(B'_{v_0}(L),t')) = \rho(|V(B_{v_0}(L))|,|V(B'_{v_0}(L))|)$. In any event, $\lim_{t \uparrow \infty} \operatorname{Mag}(B_{v_0}(L),t) = |V(B_{v_0}(L))|$.
}
This may be advantageous in the context of graph matching via optimal transport techniques that require a sensible distribution on vertices. In particular, the recently developed \emph{Gromov-Wasserstein distance} \cite{memoli2011gromov,memoli2014gromov} is useful for analyzing weighted digraphs endowed with measures \cite{chowdhury2019gromov} and has been applied to (mostly but not exclusively undirected) graph matching with state of the art performance \cite{vayer2019optimal,xu2019gromov,xu2019scalable,chowdhury2020generalized,chowdhury2020gromov,vayer2020fused}.
For instance, although \cite{xu2019scalable} did not consider digraphs, it used a distribution proportional to $(\deg + a)^b$, where $a$ and $b$ are hyperparameters, and remarked that ``the node distributions have a huge influence on the stability and the performance of our learning algorithms.'' Meanwhile, this particular sort of distribution is rather similar to the log-magnitude of a unit ball for $a = 1$ and $b = 0$. In short, we can plausibly expect to improve upon the approach of \cite{xu2019scalable} in the context of digraphs by using weightings rather than a more \emph{ad hoc} distribution.

\section*{Acknowledgement}

This research was developed with funding from the Defense Advanced Research Projects Agency (DARPA). The views, opinions and/or findings expressed are those of the author and should not be interpreted as representing the official views or policies of the Department of Defense or the U.S. Government. Distribution Statement ``A'' (Approved for Public Release, Distribution Unlimited)

\bibliographystyle{eptcs}

\end{document}